 \newtheorem{thm}{Theorem}[section]
 \newtheorem{cor}[thm]{Corollary}
 \newtheorem{lem}[thm]{Lemma}
 \newtheorem{prop}[thm]{Proposition}
 \theoremstyle{definition}
 \newtheorem{defn}[thm]{Definition}
 \theoremstyle{remark}
 \newtheorem{rem}[thm]{Remark}
 \numberwithin{equation}{section}
\begin{document}

%
%
%
%
%
%
%
%
%

\title{Exponential quadrature rules \\for linear fractional differential equations}

\author{Roberto Garrappa \\
\small Universit\`a degli Studi di Bari - Dipartimento di Matematica\\ 
\small Via E. Orabona 4 - 70125 Bari - Italy \\
\texttt{roberto.garrappa@uniba.it}
\and
Marina Popolizio \\
\small Universit\`a del Salento - Dipartimento di Matematica e Fisica ``Ennio De Giorgi"\\ 
\small Via per Arnesano - 73100 Lecce - Italy \\
\texttt{marina.popolizio@unisalento.it}
}

\date{October 21, 2013 
}

\maketitle

\begin{abstract}
This paper focuses on the numerical solution of initial value problems for fractional differential equations of linear type. 
The approach we propose grounds on expressing the solution in terms of some integral weighted by a generalized Mittag--Leffler function. Then suitable quadrature rules  are devised and order conditions of algebraic type are derived.
Theoretical findings are validated by means of numerical experiments and the effectiveness of the proposed approach is illustrated by means of comparisons with other standard methods.
\end{abstract}





\def\Rset{{\mathbb{R}}}
\def\Nset{{\mathbb{N}}}
\def\Cset{{\mathbb{C}}}

\section{Introduction}

Fractional calculus is nowadays receiving a great attention from scientists of different branches; models involving derivatives of non integer order are indeed successfully employed to describe many complex processes and real--world
phenomena in several areas such as biology, engineering, finance, physics and so forth (e.g, see \cite{BuenoOrovioKayGrauRodriguezBurrage2013,Cafagna2007,DancaGarrappaTangChen2013,LinoMaione2013,Mainardi2010,RazminiaBaleanu2013}). 


At the same time the subject of the numerical solution of  fractional differential equations (FDEs) has been extensively addressed in order to provide efficient tools for the simulation of fractional order models and new important contributions have been recently published;
we cite, among the others, \cite{DiethelmFreed1999,GarrappaPopolizio2011_CAMWA,Garrappa2013_CAMWA,LiChenYe2011,Lubich1986,Moret2013} and references therein. Recently, in \cite{Garrappa2014_MCS} the investigation of numerical methods has been extended also to FDEs with discontinuous right--hand side \cite{DieciLopez2009,DieciLopez2011,DieciLopez2012}. 

A typical impasse in the numerical solution of FDEs is the long and persistent memory, a common problem with methods for Volterra Integral Equations \cite{CardoneConte2013,CardoneIxaruPaternoster2010}.

However one of the major difficulties in the development of numerical methods for FDEs is to obtain a sufficiently high accuracy. Classical approaches for ordinary differential equations (ODEs) based on polynomial replacements usually fail to provide satisfactory results when applied to FDEs: indeed, as shown in \cite{Lubich1983}, the asymptotic expansion of the solution of a FDE possesses real powers which can not be suitably approximated by means of polynomials.


Due to this lack of smoothness, methods like product integration (PI) rules (which can be considered as the counterpart of Adams multistep methods for ODEs) suffer from a severe order barrier when applied in a fractional order context; it has been shown \cite{Dixon1985} that, regardless of the degree of the polynomial replacement, an order of convergence not exceeding 2 is expected for FDEs in the more general situation (see also \cite{DiethelmFordFreed2004,LiTao2009}).

The main aim of this paper is to investigate alternative approaches in which the special nature of the problem can be exploited in order to device suitable numerical methods capable to provide accuracy of high order. This is the case of linear FDEs, whose generic expression is 
\begin{equation}\label{eq:FDE_Linear}
		D^{\alpha}_{t_0} y(t) + \lambda y(t) = f(t) , 
\end{equation}
where $\alpha\in\Rset$ is the positive fractional order, $\lambda\in\Rset$, $y(t):[t_0,T] \to \Rset$ and the forcing term $f(t)$ is assumed sufficiently smooth. The symbol $D^{\alpha}_{t_0}$ denotes the derivative operator of non--integer order $\alpha$, with respect to the origin $t_0$, according to the Caputo definition \cite{Diethelm2010,KilbasSrivastavaTrujillo2006,Podlubny1999} 
\[
	D^{\alpha}_{t_{0}} y(t) =
	\frac{1}{\Gamma(m-\alpha)} 
	  \int_{t_{0}}^t \frac{y^{(m)}(u)}{\bigl(t-u\bigr)^{\alpha+1-m}} du ,
\]
with $\Gamma(\cdot)$ the Euler gamma function, $m = \left\lceil \alpha \right\rceil$ the smallest integer such that $\alpha<m$ and $y^{(m)}$ the standard derivative of integer order; equation (\ref{eq:FDE_Linear}) is usually coupled with initial conditions in the form 
\begin{equation}\label{eq:_ic_FDE_Linear}
	y^{(k)}(t_{0}) = y_{0,k}
	, \quad
	k=0,\dots,m-1 .
\end{equation}

The scalar equation (\ref{eq:FDE_Linear}) represents a model problem having importance in several areas, for instance in control theory. Anyway, more involved models, such as multidimensional systems (possibly coming from spatial discretization of partial differential equations with time and/or space fractional derivatives), can be considered and analyzed starting from (\ref{eq:FDE_Linear}). 

The basic strategy underlying the approach proposed in this paper consists in reformulating (\ref{eq:FDE_Linear}) in terms of some special weighted integrals having just the forcing term $f(t)$ as the integrand and hence device specific quadrature rules for their numerical approximation. In this way we expect high accuracy under suitable and reasonable assumptions of smoothness just for the integrand $f(t)$ (a quite common situation in real--life applications); the same goal can be hardly achieved by using a classical formulation in which quadrature rules are applied to the usually non--smooth whole vector field $\lambda y(t) + f(t)$.

Rules of this kind can be considered as a generalization of the {\emph{exponential quadrature rules}} employed in the numerical treatment of linear ODEs \cite{HochbruckOstermann2010} to improve stability properties: as we will see, also convergence properties can benefit from approaches of this kind when applied to FDEs.



This paper is organized as follows. In Section \ref{S:SpecialFunction} we present the alternative formulation for (\ref{eq:FDE_Linear}) in terms of some generalized Mittag--Leffler functions and we present a brief review of the main properties of these important functions. In Section \ref{S:NumericalMethod} we introduce the new quadrature rules, we provide a systematic theory concerning their order of accuracy and we derive the conditions for generating rules of any prescribed degree of precision. By using these results, we perform in Section \ref{S:ApplicationsFDS} the error analysis for the application to the numerical solution of FDEs. Section \ref{S:EvaluationQuadratureWeights} is hence devoted to the evaluation of the quadrature weights by means of some techniques combining the inversion of the Laplace transform with rational approximations; the multidimensional case is then addressed. Finally in Section \ref{S:NumericalExperiments} we present some numerical tests to validate theoretical results and compare the approach investigated in this paper with classical methods. 


\section{Variation--of--constant formula and generalized Mittag--Leffler functions}\label{S:SpecialFunction}

By means of the Laplace transform, it is immediate to see that problem (\ref{eq:FDE_Linear}-\ref{eq:_ic_FDE_Linear}) can be rewritten in the spectral domain as
\[
	s^{\alpha} Y(s) - \sum_{k=0}^{m-1} s^{\alpha-k-1} y_{0,k} + \lambda Y(s) = F(s) ,
\]
where $Y(s)$ and $F(s)$ are the Laplace transform of $y(t)$ and $f(t)$ respectively \cite{Podlubny1999}; hence $Y(s)$ can be obtained as 
\[
	Y(s) = \sum_{k=0}^{m-1} \frac{s^{\alpha-k-1}}{s^{\alpha}+\lambda} y_{0,k} + \frac{F(s)}{s^{\alpha}+\lambda} .
\]

Thus, turning back to the temporal domain, the solution of (\ref{eq:FDE_Linear}-\ref{eq:_ic_FDE_Linear}) can be written as
\begin{equation}\label{eq:FDE_VariationConstantFormula}
	y(t) =
	\sum_{k=0}^{m-1}  e_{\alpha,k+1}(t-t_{0};\lambda) y_{0,k} + \int_{t_{0}}^{t} e_{\alpha,\alpha}(t-s;\lambda) f(s) ds,
\end{equation}
where $e_{\alpha,\beta}(t;\lambda)$ is the inverse Laplace transform  of $s^{\alpha-\beta}/\bigl( s^{\alpha}+\lambda \bigr)$; it is a generalization of the Mittag--Leffler (ML) function
\[
	E_{\alpha,\beta}(z) = \sum_{k=0}^{\infty} \frac{z^{k}}{\Gamma(\alpha k + \beta)},\,\,z\in\Cset,
\]
and can be evaluated \cite{Mainardi2010,Podlubny1999} according to
\[
	e_{\alpha,\beta}(t;\lambda) = t^{\beta-1} E_{\alpha,\beta}(-t^{\alpha} \lambda) .
\]

For any $z\in\Cset$ the behavior of $e_{\alpha,\beta}(t;z)$ for $t$ close to the origin can be studied directly from its definition; indeed, it is elementary to observe that  $e_{\alpha,\beta}(0;z) = 0$ when $\beta >1$, while $e_{\alpha,1}(0;z) = 1$ and $\lim_{t\to 0^{+}} e_{\alpha,\beta}(t;z) = + \infty$ for $\beta < 1$. 

In the following we will make use of an important result concerning the integration of $e_{\alpha,\beta}$ \cite{Podlubny1999}.

\begin{lem}\label{lem:IntegrationMittagLeffler}
Let $a < t$, $\Re(\alpha) > 0$, $\beta>0$ and $r\in\Rset$ such that $r>-1$. Then for any $z\in\Cset$
\begin{eqnarray*}
	\frac{1}{\Gamma(r+1)} \int_{a}^{t} e_{\alpha,\beta}(t-s;z) (s-a)^r ds
	= e_{\alpha,\beta+r+1}(t-a;z).
\end{eqnarray*}
\end{lem}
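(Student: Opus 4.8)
The plan is to recognize the left-hand side as a Laplace convolution and to exploit the fact that $e_{\alpha,\beta}(t;z)$ is defined precisely through its Laplace transform. First I would remove the shift by substituting $\sigma = s-a$, which turns the integral into
\[
	\int_{a}^{t} e_{\alpha,\beta}(t-s;z)(s-a)^r\,ds = \int_{0}^{\tau} e_{\alpha,\beta}(\tau-\sigma;z)\,\sigma^r\,d\sigma, \qquad \tau = t-a,
\]
so that everything is expressed in the single variable $\tau$ and the right-hand side of the claimed identity becomes $\Gamma(r+1)\,e_{\alpha,\beta+r+1}(\tau;z)$. The displayed integral is exactly the convolution of $e_{\alpha,\beta}(\cdot\,;z)$ with the power function $g(\sigma)=\sigma^r$.

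The key step is then to multiply the two Laplace transforms. By definition the transform of $e_{\alpha,\beta}(\cdot\,;z)$ is $s^{\alpha-\beta}/(s^{\alpha}+z)$, while the transform of $\sigma^r$ is the standard $\Gamma(r+1)/s^{r+1}$, valid for $r>-1$. The convolution theorem then gives, for the transform of the integral above,
\[
	\frac{s^{\alpha-\beta}}{s^{\alpha}+z}\cdot\frac{\Gamma(r+1)}{s^{r+1}} = \Gamma(r+1)\,\frac{s^{\alpha-(\beta+r+1)}}{s^{\alpha}+z},
\]
which is precisely $\Gamma(r+1)$ times the Laplace transform of $e_{\alpha,\beta+r+1}(\cdot\,;z)$. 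Inverting and dividing by $\Gamma(r+1)$ yields the assertion.

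A fully elementary alternative, which also serves as a check, bypasses Laplace inversion altogether: expand $e_{\alpha,\beta}(\tau-\sigma;z)=\sum_{k\ge 0}(-z)^k (\tau-\sigma)^{\alpha k+\beta-1}/\Gamma(\alpha k+\beta)$, integrate term by term, and evaluate each inner integral as a Beta integral, $\int_{0}^{\tau}(\tau-\sigma)^{\alpha k+\beta-1}\sigma^r\,d\sigma = \tau^{\alpha k+\beta+r}\,\Gamma(\alpha k+\beta)\,\Gamma(r+1)/\Gamma(\alpha k+\beta+r+1)$. The factors $\Gamma(\alpha k+\beta)$ cancel, and what remains reassembles exactly into $\Gamma(r+1)\,\tau^{\beta+r}E_{\alpha,\beta+r+1}(-z\tau^{\alpha})=\Gamma(r+1)\,e_{\alpha,\beta+r+1}(\tau;z)$.

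The main obstacle in either route is one of justification rather than computation. In the Laplace approach one must ensure the transforms share a common half-plane of convergence and invoke uniqueness of the inverse transform; in the series approach one must justify interchanging summation and integration. Both hinge on the same integrability facts: the factor $\sigma^r$ is integrable near the origin exactly because $r>-1$, and the entire Mittag--Leffler series converges uniformly on the compact interval $[0,\tau]$, so dominated convergence applies. The hypotheses $\beta>0$ and $r>-1$ are precisely what guarantee the inner Beta integrals, equivalently the convolution, are well defined.
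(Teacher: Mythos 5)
Your proposal is correct, but there is nothing in the paper to compare it against: the paper states this lemma without proof, citing it as a known result from Podlubny's book, so any complete argument is necessarily "different" from the paper's treatment. Both of your routes are sound and fit the paper's framework well. The Laplace--convolution argument is the more natural one here, since the paper \emph{defines} $e_{\alpha,\beta}(t;z)$ as the inverse Laplace transform of $s^{\alpha-\beta}/(s^{\alpha}+z)$; after the shift $\sigma=s-a$ the claim is exactly that the transform of the convolution $e_{\alpha,\beta}(\cdot\,;z)*\sigma^{r}$ equals $\Gamma(r+1)\,s^{\alpha-(\beta+r+1)}/(s^{\alpha}+z)$, and uniqueness of the inverse transform for continuous functions on a common half-plane of convergence finishes the proof. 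Your series/Beta-integral computation is essentially the classical proof found in the cited reference. One small repair is needed in its justification: the expanded series $\sum_{k}(-z)^{k}(\tau-\sigma)^{\alpha k+\beta-1}/\Gamma(\alpha k+\beta)$ does \emph{not} converge uniformly on $[0,\tau]$ when $\beta<1$, because its $k=0$ term is unbounded as $\sigma\to\tau$. Either factor out $(\tau-\sigma)^{\beta-1}$ first and use that the entire-function series for $E_{\alpha,\beta}$ converges uniformly on the bounded set $|w|\le\tau^{\Re(\alpha)}|z|$, so that the integrable weight $(\tau-\sigma)^{\beta-1}\sigma^{r}$ supplies the dominating function; or apply Tonelli's theorem to the positive-term series obtained by replacing $-z$ with $|z|$ (its integrated sum is finite, being $\Gamma(r+1)\,\tau^{\beta+r}E_{\alpha,\beta+r+1}(|z|\tau^{\alpha})$), which then licenses the interchange for the original series by dominated convergence. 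With that adjustment, both of your arguments are complete, and the hypotheses $\beta>0$, $r>-1$ enter exactly where you say they do.
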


To keep the notation as compact as possible, for $a<b\le t$ and $r\in\Rset$ we conveniently introduce the function 
\begin{equation}\label{eq:DefinitionR}
	R_{\alpha,\beta,r}(t,a,b;z) = \frac{1}{\Gamma(r+1)} \int_{a}^{b} e_{\alpha,\beta}(t-s;z) (s-a)^r ds .
\end{equation}

The function $R_{\alpha,\beta,r}(t,a,b;z)$ is itself a generalization of $e_{\alpha,\beta}(t;z)$; indeed, Lemma \ref{lem:IntegrationMittagLeffler} implies that $R_{\alpha,\beta,r}(t,a,t;z) = e_{\alpha,\beta+r+1}(t-a;z)$.


In several situations it has been observed that it can be convenient to scale the function $e_{\alpha,\beta}$ according to 
\begin{equation}\label{eq:ScalingFunctionE}
	e_{\alpha,\beta}(t;z) = h^{\beta-1} e_{\alpha,\beta}(t/h;h^{\alpha}z) ,
\end{equation}
where $h$ is any positive real value \cite{GarrappaPopolizio2011_CAMWA,GarrappaPopolizio2011}. A similar scaling holds also for $R_{\alpha,\beta,k}(t,a,b;z)$ since, as it can be proved by using Lemma \ref{lem:DefinedIntegrationMittagLefflerScaled} in the Appendix, it is
\begin{equation}\label{eq:DefinedIntegrationMittagLeffler}
	R_{\alpha,\beta,k}(t,a,b;z) 
	= h^{\beta+k} R_{\alpha,\beta,k}\bigg(\frac{t-a}{h},0,1;h^{\alpha}z\bigg)
	, \quad h=b-a. 
\end{equation}

\section{Exponential quadrature rules of fractional order}\label{S:NumericalMethod}

Let us consider on $[t_{0},T]$ an equispaced mesh $t_{j}=t_{0}+jh$, with step--size $h>0$, and rewrite Equation (\ref{eq:FDE_VariationConstantFormula}) 
 in a piecewise manner as
\begin{equation}\label{eq:FDE_VariationConstantFormulaPiecewise}
	y(t_{n}) = \sum_{k=0}^{m-1}  e_{\alpha,k+1}(t_{n}-t_{0};\lambda) y_{0,k} 
	+ \sum_{j=0}^{n-1} \int_{t_{j}}^{t_{j+1}} e_{\alpha,\alpha}(t_{n}-s;\lambda) f(s) ds .
\end{equation}
  
To approximate the weighted integrals 
\[
	{\mathcal I}_j\bigl[f,t_n\bigr] = \int_{t_{j}}^{t_{j+1}} e_{\alpha,\alpha}(t_n-s;\lambda) f(s) ds 
\]
we fix $\nu$ nodes $c_{1},c_{2},\dots,c_{\nu} \in [0,1]$ and consider the quadrature rules of the type
\begin{equation}\label{eq:QuadratureRule}
	{\mathcal Q}_{n,j}\bigl[f\bigr] =
	\sum_{r=1}^{\nu} b_{r}(n-j) f(t_{j}+c_{r}h) 
\end{equation}
(obviously, weights $b_{r}(j)$ depend on $\alpha$ and $\lambda$ but for simplicity we omitted this dependence in the notation). 

The remainder of the approximation is
\[
	{\mathcal T}_{n,j}\bigl[f\bigr] = {\mathcal I}_j\bigl[f,t_{n}\bigr] - {\mathcal Q}_{n,j}\bigl[f\bigr] 
\]
which we can better analyze by extending to ${\mathcal Q}_{n,j}$ the Peano's theorem for classical quadrature rules (e.g., see \cite{DavisRabinowitz1984}). To this purpose it is useful to first consider, for any $k\in\Nset$ and $\xi\in\Rset$, the generalized step function
\[
	G_{\xi,k}(t)\equiv (t-\xi)_{+}^k = \left\{ 
		\begin{array}{ll}
		(t-\xi)^k & \textrm{if } \xi \le t \\
		0 & \textrm{if } \xi > t \\
		\end{array}. \right.
\]

\begin{thm}\label{thm:FDE_PeanoKernel1}
Let $f\in{\mathcal C}^{K}\bigl([t_{j},t_{j+1}]\bigr)$ and assume $f^{(K+1)}\in{\mathcal C}\bigl(]t_{j},t_{j+1}[\bigr)$. Then
\[
		{\mathcal T}_{n,j}\bigl[f\bigr] 
		= \sum_{k=0}^{K} \frac{f^{(k)}(t_{j})}{k!}  {\mathcal T}_{n,j}[G_{t_j,k}] 
		+ \int_{t_{j}}^{t_{j+1}} \frac{f^{(K+1)}(\xi)}{K!} {\mathcal T}_{n,j}[G_{\xi,K}] d\xi .
\]
\end{thm}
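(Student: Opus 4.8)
The plan is to adapt the classical proof of Peano's theorem, combining a Taylor expansion with integral remainder with the linearity of the functional $\mathcal{T}_{n,j}$. First I would write, for $t\in[t_j,t_{j+1}]$, the Taylor expansion of $f$ about the left endpoint $t_j$ in integral--remainder form,
\[
	f(t) = \sum_{k=0}^{K} \frac{f^{(k)}(t_j)}{k!}\,(t-t_j)^k
	+ \frac{1}{K!}\int_{t_j}^{t} f^{(K+1)}(\xi)\,(t-\xi)^K\, d\xi,
\]
which is legitimate under the stated regularity $f\in\mathcal{C}^{K}([t_j,t_{j+1}])$ with $f^{(K+1)}$ continuous, hence integrable, on $]t_j,t_{j+1}[$.

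The key observation is that both families of terms on the right can be expressed through the generalized step function. On $[t_j,t_{j+1}]$ we have $(t-t_j)^k = G_{t_j,k}(t)$ since $t\ge t_j$, and, because $(t-\xi)^K$ vanishes exactly when $\xi>t$, the upper limit of the remainder integral may be raised to $t_{j+1}$, giving $\int_{t_j}^{t}f^{(K+1)}(\xi)(t-\xi)^K\,d\xi = \int_{t_j}^{t_{j+1}} f^{(K+1)}(\xi)\,G_{\xi,K}(t)\,d\xi$. Applying the linear functional $\mathcal{T}_{n,j}$ (which acts on functions of $t$ and is linear as the difference of the integral $\mathcal{I}_j$ and the finite--sum rule $\mathcal{Q}_{n,j}$) to both sides then produces the discrete part $\sum_{k=0}^{K} \frac{f^{(k)}(t_j)}{k!}\,\mathcal{T}_{n,j}[G_{t_j,k}]$ directly, so that only the remainder term remains to be handled.

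The crux is the interchange of $\mathcal{T}_{n,j}$ with the integration in $\xi$, namely
\[
	\mathcal{T}_{n,j}\Bigl[\int_{t_j}^{t_{j+1}} f^{(K+1)}(\xi)\,G_{\xi,K}(\cdot)\,d\xi\Bigr]
	= \int_{t_j}^{t_{j+1}} f^{(K+1)}(\xi)\,\mathcal{T}_{n,j}[G_{\xi,K}]\,d\xi,
\]
after which the asserted expansion follows at once. For the quadrature part $\mathcal{Q}_{n,j}$ the interchange is trivial, being a finite sum of point evaluations; the genuine work lies in the integral part $\mathcal{I}_j$, where it becomes a double integral in $(s,\xi)$ over $[t_j,t_{j+1}]^2$ with integrand $e_{\alpha,\alpha}(t_n-s;\lambda)\,f^{(K+1)}(\xi)\,(s-\xi)_+^K$.

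This is a Fubini--Tonelli argument, and the delicate point I expect to be the main obstacle is the weakly singular weight: in the diagonal block $j=n-1$ one has $t_{j+1}=t_n$, so $e_{\alpha,\alpha}(t_n-s;\lambda)=(t_n-s)^{\alpha-1}E_{\alpha,\alpha}\bigl(-(t_n-s)^{\alpha}\lambda\bigr)$ blows up as $s\to t_n^{-}$ when $\alpha<1$. Since $\alpha>0$, however, this singularity is of integrable type $(t_n-s)^{\alpha-1}$; as $E_{\alpha,\alpha}$ is bounded near the origin, the factor $(s-\xi)_+^K$ is bounded on the square, and $f^{(K+1)}$ is integrable, the modulus of the integrand is dominated by a product of an integrable function of $s$ and an integrable function of $\xi$. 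Tonelli's theorem then yields absolute integrability over the square and Fubini's theorem validates the interchange, completing the proof.
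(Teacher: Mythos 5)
Your proof is correct and follows essentially the same route as the paper's: a Taylor expansion of $f$ about $t_{j}$ with integral remainder, the positive--part trick to express the remainder through $G_{\xi,K}$ on the full interval, and an interchange of the order of integration to pull the $\xi$--integral outside the weighted integral $\mathcal{I}_j$. The only difference is cosmetic and in your favor: you explicitly justify the Fubini--Tonelli interchange against the weakly singular factor $(t_n-s)^{\alpha-1}$ (relevant when $j=n-1$ and $\alpha<1$), a step the paper performs without comment.
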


\begin{proof}
By replacing the function $f$ in $ {\mathcal I}_j\bigl[f,t_{n}\bigr]$ and $f(t_{j}+c_{r}h)$ in (\ref{eq:QuadratureRule}) respectively with their Taylor expansions
\[
	f(s) = 
	\sum_{k=0}^{K} \frac{f^{(k)}(t_{j})}{k!}(s-t_{j})^k + \int_{t_{j}}^{s} \frac{(s-\xi)^{K}}{K!} f^{(K+1)}(\xi) d\xi 
\]
\[
	f(t_{j}+c_{r}h) 
	= \sum_{k=0}^{K} \frac{f^{(k)}(t_{j})}{k!}h^k c_{r}^k
			+ \int_{t_{j}}^{t_{j}+c_{r}h} \frac{(t_{j}+c_{r}h-\xi)^{K}}{K!} f^{(K+1)}(\xi) d\xi 
\]
it is immediate, after subtraction, to see that 
\[
	{\mathcal T}_{n,j}\bigl[f\bigr] 
	= \sum_{k=0}^{K} \frac{f^{(k)}(t_{j})}{k!} {\mathcal T}_{n,j}\bigl[G_{t_j,k}\bigr] 
	+ \tilde{T}_{n,j}[f]
\]
where $\tilde{T}_{n,j}[f]$ is given by
\begin{eqnarray*}
	\tilde{T}_{n,j}[f]
	&=& \int_{t_{j}}^{t_{j+1}} e_{\alpha,\alpha}(t_{n}-s;\lambda)
			\int_{t_{j}}^{s} \frac{(s-\xi)^{K}}{K!} f^{(K+1)}(\xi) d\xi \,\,ds\\
	& & - \sum_{r=1}^{\nu} b_{r}(n-j) 
			 \int_{t_{j}}^{t_{j}+c_{r}h} \frac{(t_{j}+c_{r}h-\xi)^{K}}{K!} f^{(K+1)}(\xi) d\xi .
\end{eqnarray*}

To study $\tilde{T}_{n,j}[f]$ we preliminarily observe that by inverting the order of integration we are able to obtain
\[
	\int_{t_{j}}^{t_{j+1}} e_{\alpha,\alpha}(t_{n}-s;\lambda) \int_{t_{j}}^{s} \frac{(s-\xi)^{K}}{K!} f^{(K+1)}(\xi) d\xi  ds
	= \int_{t_{j}}^{t_{j+1}}  \frac{f^{(K+1)}(\xi)}{K!} {\mathcal I}_j[G_{\xi,K},t_{n}] d\xi 
\]
and, furthermore, since
\[
	\int_{t_{j}}^{t_{j}+c_{r}h} f^{(K+1)}(\xi) (t_{j}+c_{r}h-\xi)^{K} d\xi 
	= \int_{t_{j}}^{t_{j+1}} f^{(K+1)}(\xi) (t_{j}+c_{r}h-\xi)_{+}^{K} d\xi,
\]
it is elementary to see that 
\[
	\tilde{T}_{n,j}(f) =
	\int_{t_{j}}^{t_{j+1}} f^{(K+1)}(\xi) \frac{1}{K!}
	{\mathcal T}_{n,j}\bigl[G_{\xi,K}\bigr] d\xi
\]
from which the proof follows. 
\end{proof}

The classical concept of {\em{degree of precision}} applies to the quadrature ${\mathcal Q}_{n,j}$ as stated in the following definition.

\begin{defn}\label{defn:DegreePrecision}
A quadrature rule ${\mathcal Q}_{n,j}$ is said to have {\em{degree of precision}} $d$ if it is exact for any polynomial of degree not exceeding $d$ but it is not exact for polynomials of degree $d+1$.
\end{defn}

The following proposition shows that the function ${\mathcal T}_{n,j}\bigl[G_{\xi,K}\bigr] / K!$
is the $K$--th {\em{Peano kernel}} of the error of the quadrature ${\mathcal Q}_{n,j}$ if it has degree of precision $K$. It is an immediate consequence of Definition \ref{defn:DegreePrecision} and Theorem \ref{thm:FDE_PeanoKernel1} and therefore we omit the proof.

\begin{prop}\label{prop:ErrorQuadratureRulePeanoKernel}
Let $f\in{\mathcal C}^{K}\bigl([t_{j},t_{j+1}]\bigr)$ and assume $f^{(K+1)}\in{\mathcal C}\bigl(]t_{j},t_{j+1}[\bigr)$. If ${\mathcal Q}_{n,j}$ has degree of precision $K$ then
\begin{equation}\label{eq:FDE_ConvolutionQuadrature_ErrorPeanoKernel}
		{\mathcal T}_{n,j}\bigl[f\bigr] 
		= \int_{t_{j}}^{t_{j+1}} \frac{f^{(K+1)}(\xi)}{K!} {\mathcal T}_{n,j}\bigl[G_{\xi,K}\bigr] d\xi .
\end{equation}
\end{prop}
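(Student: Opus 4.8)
The plan is to read off the result directly from Theorem~\ref{thm:FDE_PeanoKernel1} and to show that the degree-of-precision hypothesis annihilates the finite sum, leaving only the integral remainder. Applying the theorem to $f$ gives
\[
	{\mathcal T}_{n,j}\bigl[f\bigr]
	= \sum_{k=0}^{K} \frac{f^{(k)}(t_{j})}{k!} {\mathcal T}_{n,j}\bigl[G_{t_j,k}\bigr]
	+ \int_{t_{j}}^{t_{j+1}} \frac{f^{(K+1)}(\xi)}{K!} {\mathcal T}_{n,j}\bigl[G_{\xi,K}\bigr] d\xi ,
\]
so proving (\ref{eq:FDE_ConvolutionQuadrature_ErrorPeanoKernel}) reduces to establishing that ${\mathcal T}_{n,j}\bigl[G_{t_j,k}\bigr] = 0$ for every $k = 0,1,\dots,K$.

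To verify this I would note that ${\mathcal T}_{n,j}\bigl[G_{t_j,k}\bigr] = {\mathcal I}_j\bigl[G_{t_j,k},t_n\bigr] - {\mathcal Q}_{n,j}\bigl[G_{t_j,k}\bigr]$ only evaluates $G_{t_j,k}$ at arguments $s$ ranging over $[t_{j},t_{j+1}]$ in the integral and at the quadrature nodes $t_{j}+c_{r}h$, which also lie in $[t_{j},t_{j+1}]$ because $c_{r}\in[0,1]$. Throughout this interval one has $t \ge t_{j}$, hence the truncation in the definition $G_{t_j,k}(t) = (t-t_{j})_{+}^{k}$ is inactive and $G_{t_j,k}$ coincides with the genuine polynomial $(t-t_{j})^{k}$ of degree $k \le K$. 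Since ${\mathcal Q}_{n,j}$ has degree of precision $K$, Definition~\ref{defn:DegreePrecision} guarantees that the rule is exact on this polynomial, so ${\mathcal T}_{n,j}\bigl[G_{t_j,k}\bigr] = 0$ for each such $k$.

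Summing over $k$ then kills the entire first sum in the expansion above and leaves precisely the Peano-kernel integral asserted in (\ref{eq:FDE_ConvolutionQuadrature_ErrorPeanoKernel}). There is no genuine obstacle here; the only point requiring a moment's care is the observation that $G_{t_j,k}$ restricts to a degree-$k$ polynomial on $[t_{j},t_{j+1}]$, so that the degree-of-precision hypothesis actually applies to it. Once this is noted, the statement is an immediate substitution, which is exactly why the authors choose to omit the proof.
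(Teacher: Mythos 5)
Your proof is correct and takes exactly the route the paper intends: the authors omit the argument precisely because, as you show, the statement follows at once from Theorem~\ref{thm:FDE_PeanoKernel1} together with the degree-of-precision hypothesis (Definition~\ref{defn:DegreePrecision}), which forces ${\mathcal T}_{n,j}\bigl[G_{t_j,k}\bigr]=0$ for $k=0,1,\dots,K$ and thus annihilates the finite sum. Your explicit observation that $G_{t_j,k}$ coincides with the genuine polynomial $(t-t_{j})^{k}$ on $[t_{j},t_{j+1}]$, the only region where both the weighted integral and the quadrature nodes sample the integrand, is exactly the point that makes the exactness hypothesis applicable.
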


\begin{cor}\label{cor:ErrorPeanoConstantSign}
Let $f\in{\mathcal C}^{K}\bigl([t_{j},t_{j+1}]\bigr)$ and assume $f^{(K+1)}\in{\mathcal C}\bigl(]t_{j},t_{j+1}[\bigr)$. If ${\mathcal Q}_{n,j}$ has degree of precision $K$ and its Peano Kernel does not change its sign in $[t_{j},t_{j+1}]$, there exists a $\zeta \in ]t_j,t_{j+1}[$ such that
\[
		{\mathcal T}_{n,j}\bigl[f\bigr] = \frac{f^{(K+1)}(\zeta)}{(K+1)!} {\mathcal T}_{n,j}\bigl[G_{t_j,K+1}\bigr].
\]
\end{cor}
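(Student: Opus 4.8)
The plan is to combine Proposition~\ref{prop:ErrorQuadratureRulePeanoKernel} with the classical mean value theorem for integrals. Since $\mathcal{Q}_{n,j}$ has degree of precision $K$, Proposition~\ref{prop:ErrorQuadratureRulePeanoKernel} already provides the integral representation
\[
	\mathcal{T}_{n,j}[f]
	= \int_{t_j}^{t_{j+1}} \frac{f^{(K+1)}(\xi)}{K!}\, \mathcal{T}_{n,j}[G_{\xi,K}]\, d\xi.
\]
The hypothesis that the Peano kernel $\mathcal{T}_{n,j}[G_{\xi,K}]/K!$ does not change sign on $[t_j,t_{j+1}]$ is precisely what licenses the weighted mean value theorem: since $f^{(K+1)}$ is continuous on the open interval and the kernel keeps a constant sign, I would invoke the integral mean value theorem to pull $f^{(K+1)}$ out of the integral, evaluated at some intermediate point $\zeta\in\,]t_j,t_{j+1}[$. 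This yields
\[
	\mathcal{T}_{n,j}[f]
	= \frac{f^{(K+1)}(\zeta)}{K!} \int_{t_j}^{t_{j+1}} \mathcal{T}_{n,j}[G_{\xi,K}]\, d\xi.
\]

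The remaining task is to identify the leftover integral $\int_{t_j}^{t_{j+1}} \mathcal{T}_{n,j}[G_{\xi,K}]\, d\xi$ with the quantity $\mathcal{T}_{n,j}[G_{t_j,K+1}]/(K+1)$ appearing in the statement. First I would recognize that this integral is itself the error functional applied to a specific function: by linearity of $\mathcal{T}_{n,j}$ in its argument, integrating the error of $G_{\xi,K}$ against $\xi$ should reproduce the error of the antiderivative-type function $G_{t_j,K+1}$. Concretely, one checks that $\int_{t_j}^{t_{j+1}} G_{\xi,K}(s)\, d\xi = (s-t_j)^{K+1}/(K+1) = G_{t_j,K+1}(s)/(K+1)$ for $s\in[t_j,t_{j+1}]$, and similarly for the evaluations at the quadrature nodes $t_j+c_r h$. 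Interchanging the integration over $\xi$ with the definition of $\mathcal{I}_j$ and with the finite sum defining $\mathcal{Q}_{n,j}$ then gives
\[
	\int_{t_j}^{t_{j+1}} \mathcal{T}_{n,j}[G_{\xi,K}]\, d\xi
	= \frac{1}{K+1}\, \mathcal{T}_{n,j}[G_{t_j,K+1}].
\]

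Substituting this identity into the expression from the mean value step produces the factor $K!\,(K+1) = (K+1)!$ in the denominator, which is exactly the claimed formula. The main obstacle, such as it is, lies in the interchange of integrals: one must justify swapping the $\xi$-integration with the $s$-integration inside $\mathcal{I}_j$, which follows from Fubini's theorem given the continuity and boundedness of the integrands on the compact rectangle (much as in the proof of Theorem~\ref{thm:FDE_PeanoKernel1}), and one must handle the truncated power $(t_j+c_r h-\xi)_+^K$ carefully so that integration over $\xi\in[t_j,t_{j+1}]$ correctly recovers the truncated power of order $K+1$ at the nodes. Beyond these routine verifications, the argument is a direct transcription of the standard ODE Peano-kernel theory, with the sign-definiteness hypothesis doing all the essential work.
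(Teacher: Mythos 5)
Your proposal is correct, and its first half coincides with the paper's proof: both start from Proposition~\ref{prop:ErrorQuadratureRulePeanoKernel} and use the sign-definiteness of the Peano kernel to invoke the weighted mean value theorem, arriving at ${\mathcal T}_{n,j}[f] = \frac{f^{(K+1)}(\zeta)}{K!}\int_{t_j}^{t_{j+1}}{\mathcal T}_{n,j}[G_{\xi,K}]\,d\xi$. You diverge in how the leftover integral is identified with ${\mathcal T}_{n,j}[G_{t_j,K+1}]/(K+1)$. The paper's step is self-referential: it applies the just-derived mean value identity to $f=G_{t_j,K+1}$, whose $(K+1)$-st derivative is the constant $(K+1)!$, so the unknown point $\zeta$ drops out and one reads off ${\mathcal T}_{n,j}[G_{t_j,K+1}] = (K+1)\int_{t_j}^{t_{j+1}}{\mathcal T}_{n,j}[G_{\xi,K}]\,d\xi$ with no computation. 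You instead prove the same identity directly, by linearity and Fubini, from the pointwise fact $\int_{t_j}^{t_{j+1}}G_{\xi,K}(s)\,d\xi = G_{t_j,K+1}(s)/(K+1)$ on $[t_j,t_{j+1}]$ and its analogue at the nodes $t_j+c_rh$. Your route is a bit longer and carries the burden of justifying the interchange of integrals; note that your appeal to ``boundedness on the compact rectangle'' is not quite accurate, since for $n=j+1$ and $\alpha<1$ the kernel $e_{\alpha,\alpha}(t_n-s;\lambda)$ is unbounded as $s\to t_{j+1}$ — one should rather invoke Fubini--Tonelli for an absolutely integrable (singular) kernel, the same issue implicitly present in the proof of Theorem~\ref{thm:FDE_PeanoKernel1}. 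The paper's trick sidesteps integrability questions entirely but is less transparent about where the constant comes from. As a side benefit, your computation pins down the factor $K+1$ and thereby exposes a typo in the paper's intermediate display, which reads ${\mathcal T}_{n,j}[G_{t_j,K+1}] = K\int_{t_j}^{t_{j+1}}{\mathcal T}_{n,j}[G_{\xi,K}]\,d\xi$: the factor must be $K+1$, otherwise the stated denominator $(K+1)!$ in the corollary would not emerge.
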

\begin{proof}
Since ${\mathcal T}_{n,j}\bigl[G_{\xi,K}\bigr]$ has the same sign in $[t_{j},t_{j+1}]$ the mean value theorem can be applied to (\ref{eq:FDE_ConvolutionQuadrature_ErrorPeanoKernel}) to obtain
\[
		{\mathcal T}_{n,j}\bigl[f\bigr] 
		= \frac{f^{(K+1)}(\zeta)}{K!} \int_{t_{j}}^{t_{j+1}}{\mathcal T}_{n,j}\bigl[G_{\xi,K}\bigr] d\xi ,
\]
with $t_{j}<\zeta<t_{j+1}$. The application of the above equation to $G_{t_j,K+1}$ leads to 
\[
	{\mathcal T}_{n,j}\bigl[G_{t_j,K+1}\bigr] 
		= K \int_{t_{j}}^{t_{j+1}} {\mathcal T}_{n,j}\bigl[G_{\xi,K}\bigr] d\xi 
\]
which allows to conclude the proof. 
\end{proof}

Under reasonable assumptions of smoothness for the forcing term $f(t)$, Proposition \ref{prop:ErrorQuadratureRulePeanoKernel} and its Corollary \ref{cor:ErrorPeanoConstantSign} allow to express the error of a quadrature rule (\ref{eq:QuadratureRule}) in terms of its degree of precision in a similar way as in the integer--order case.

The next step is to find necessary and sufficient conditions of algebraic type to characterize the degree of precision of the quadrature rules and hence derive the main coefficients. To this purpose we consider the following result.

\begin{prop}\label{prop:DegreePrecisionCharacterization}
A quadrature rule ${\mathcal Q}_{n,j}$ has degree of precision $d$ if and only if
\begin{equation}\label{eq:OrderConditions}
	\sum_{r=1}^{\nu} b_{r}(j) c_{r}^k =
		R_{\alpha,k}(j;h^{\alpha}\lambda) 
	, \quad k = 0, 1, \dots, d,
\end{equation}
where for shortness we put $R_{\alpha,k}(j;h^{\alpha}\lambda) = k! h^{\alpha} R_{\alpha,\alpha,k}(j,0,1;h^{\alpha}\lambda)$.
\end{prop}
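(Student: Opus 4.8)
The plan is to prove the characterization by directly testing the quadrature rule on the monomial basis. The key observation is that the degree of precision is a statement about exactness on all polynomials of degree at most $d$, and since exactness is linear in the integrand, it suffices to check exactness on each monomial $(s-t_j)^k$ for $k=0,1,\dots,d$. So first I would fix $k$ and compute separately the exact integral $\mathcal{I}_j[\,\cdot\,,t_n]$ and the quadrature $\mathcal{Q}_{n,j}[\,\cdot\,]$ applied to the shifted monomial $f(s)=(s-t_j)^k$, then equate them.

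For the quadrature side, substituting $f(t_j+c_r h)=(c_r h)^k = h^k c_r^k$ into the definition \eqref{eq:QuadratureRule} gives $\mathcal{Q}_{n,j}[(s-t_j)^k] = h^k \sum_{r=1}^{\nu} b_r(n-j)\, c_r^k$, which produces the left-hand side of \eqref{eq:OrderConditions} up to the factor $h^k$. For the exact side, I would evaluate
\[
	\mathcal{I}_j[(s-t_j)^k,t_n] = \int_{t_j}^{t_{j+1}} e_{\alpha,\alpha}(t_n-s;\lambda)\,(s-t_j)^k\, ds .
\]
Recognizing this as $k!$ times the function $R_{\alpha,\alpha,k}$ defined in \eqref{eq:DefinitionR} with $a=t_j$, $b=t_{j+1}$, $t=t_n$ and $z=\lambda$, I obtain $\mathcal{I}_j[(s-t_j)^k,t_n] = k!\, R_{\alpha,\alpha,k}(t_n,t_j,t_{j+1};\lambda)$. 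The next task is to bring this into the scaled form appearing in the statement. Using the scaling identity \eqref{eq:DefinedIntegrationMittagLeffler} with $h=t_{j+1}-t_j$ (the mesh step) gives
\[
	R_{\alpha,\alpha,k}(t_n,t_j,t_{j+1};\lambda)
	= h^{\alpha+k} R_{\alpha,\alpha,k}\!\left(\tfrac{t_n-t_j}{h},0,1;h^\alpha\lambda\right),
\]
and since the mesh is equispaced, $(t_n-t_j)/h = n-j$, so the right-hand argument becomes the integer $n-j$. Thus $\mathcal{I}_j[(s-t_j)^k,t_n] = k!\, h^{\alpha+k} R_{\alpha,\alpha,k}(n-j,0,1;h^\alpha\lambda)$, which by the shorthand definition $R_{\alpha,k}(j;h^\alpha\lambda)=k!\,h^\alpha R_{\alpha,\alpha,k}(j,0,1;h^\alpha\lambda)$ equals $h^k R_{\alpha,k}(n-j;h^\alpha\lambda)$.

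Equating the two sides and cancelling the common factor $h^k$ shows that exactness on $(s-t_j)^k$ is equivalent to $\sum_{r=1}^{\nu} b_r(n-j)\, c_r^k = R_{\alpha,k}(n-j;h^\alpha\lambda)$, i.e. \eqref{eq:OrderConditions} with index $j$ read as $n-j$. Running this over $k=0,\dots,d$ and noting that it must fail at $k=d+1$ for the degree of precision to be exactly $d$ then gives the ``if and only if'' in both directions. I expect the only delicate point to be the bookkeeping of the scaling step: one must apply \eqref{eq:DefinedIntegrationMittagLeffler} with the correct $h$ and verify that the collapsed argument is precisely the integer $n-j$, reconciling the notational index $j$ in the statement with the lag $n-j$ that actually enters through the weights $b_r(n-j)$ and the evaluation point $t_n-t_j$. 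The monomial substitution and the final cancellation are routine; the scaling identity is the load-bearing ingredient.
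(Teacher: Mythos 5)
Your proposal is correct and follows essentially the same route as the paper: both test exactness on the shifted monomials $(s-t_j)^k$, identify the exact integral as $k!\,R_{\alpha,\alpha,k}(t_n,t_j,t_{j+1};\lambda)$ via (\ref{eq:DefinitionR}), and invoke the scaling identity (\ref{eq:DefinedIntegrationMittagLeffler}) to collapse the argument to $n-j$, the only cosmetic difference being that the paper phrases the reduction to monomials through Theorem \ref{thm:FDE_PeanoKernel1} while you appeal directly to linearity. Your attention to the $j$ versus $n-j$ bookkeeping is exactly the right reconciliation of the statement's shorthand with the weights $b_r(n-j)$.
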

\begin{proof}
By (\ref{eq:DefinitionR}) it is ${\mathcal I}_j[G_{t_j,k},t_{n}] = k! R_{\alpha,\alpha,k}(t_{n},t_{j},t_{j+1};\lambda)$ and, 
after using (\ref{eq:DefinedIntegrationMittagLeffler}), we have
\[
	{\mathcal T}_{n,j}\bigl[G_{t_j,k}\bigr] = h^{k}k! 
		\Bigl( h^{\alpha} R_{\alpha,\alpha,k}(n-j,0,1,h^{\alpha}\lambda) - \sum_{r=1}^{\nu} b_{r}(n-j) \frac{c_{r}^k}{k!} \Bigr)
\]
and the proof follows thanks to Theorem \ref{thm:FDE_PeanoKernel1}.
\end{proof}

\begin{rem}
Observe that when $\alpha=1$ and $\lambda=0$, ${\mathcal I}_j[f,t_{n}]$ becomes a standard (i.e. non--weighted) integral with ${\mathcal Q}_{n,j}$ the corresponding classical quadrature rule; on the other side, thanks to Lemma \ref{eq:LimitValuesR} in the Appendix, the order conditions (\ref{eq:OrderConditions}) become
\[
	\sum_{r=1}^{\nu} b_{r}(j) c_{r}^k = \frac{h}{k+1}
\]
which are the standard conditions for the degree of precision of classical rules (note that in this case, as expected, weights $b_{r}(j)$ are constant with respect to $j$).
\end{rem}

\section{Error analysis of exponential quadrature rules for FDEs}\label{S:ApplicationsFDS}

We can now investigate the behavior of quadrature rules ${\mathcal Q}_{n,j}$ when applied for solving the linear FDE (\ref{eq:FDE_Linear}). As described before, the idea is to resort to the expression  (\ref{eq:FDE_VariationConstantFormulaPiecewise}) for its solution; then, in each interval $[t_{j},t_{j+1}]$, one can think to apply the quadrature rule ${\mathcal Q}_{n,j}$ (\ref{eq:QuadratureRule}). The resulting numerical approximation $y_{n}$ of  $y(t_n)$ would read as the convolution quadrature

\begin{equation}\label{eq:FDE_ConvolutionQuadrature}
	y_{n} =
	\sum_{k=0}^{m-1}  e_{\alpha,k+1}(t_{n}-t_{0};\lambda) y_{0,k} + \sum_{j=0}^{n-1} \sum_{r=1}^{\nu} b_{r}(n-j) f(t_{j}+c_{r}h) .
\end{equation}

Thanks to the results on the error ${\mathcal T}_{n,j}\bigl[f\bigr]$ of each quadrature rule ${\mathcal Q}_{n,j}$ we are able to study the accuracy of the numerical scheme (\ref{eq:FDE_ConvolutionQuadrature}) by means of the following result.

\begin{prop}\label{prop:FDE_ErrorSolution}
Let $f\in{\mathcal C}^{d}\bigl([t_{0},T]\bigr)$ and assume that, for any $j=0,\ldots,n-1$, $f^{(d+1)}\in{\mathcal C}\bigl(]t_{j},t_{j+1}[\bigr)$ and ${\mathcal Q}_{n,j}$ has degree of precision $d$. Then 
\begin{eqnarray*} 
	\lefteqn{
	y(t_{n}) - y_{n} 
	= h^{d+1} \sum_{j=0}^{n-1} \int_0^1 f^{(d+1)}(t_{j}+uh) \cdot }  \hspace{1.0cm} && \\
	& & \hspace{0.3cm} \cdot
			\left( h^{\alpha} R_{\alpha,\alpha,d}(n-j,u,1;h^{\alpha} \lambda) - 
			\sum_{r=1}^{\nu} b_{r}(n-j) \frac{(c_{r}-u)_{+}^{d}}{d!}\right) du . \\
\end{eqnarray*}
\end{prop}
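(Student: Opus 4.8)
The plan is to combine the global error representation with the per-interval Peano kernel from Proposition~\ref{prop:ErrorQuadratureRulePeanoKernel}. First I would observe that the exact solution $y(t_n)$ satisfies the piecewise formula (\ref{eq:FDE_VariationConstantFormulaPiecewise}) while the numerical approximation $y_n$ satisfies (\ref{eq:FDE_ConvolutionQuadrature}); since both carry the identical starting term $\sum_{k=0}^{m-1} e_{\alpha,k+1}(t_n-t_0;\lambda)y_{0,k}$, subtracting them makes the initial contribution cancel entirely. What survives is exactly the sum of the local quadrature remainders, i.e.
\[
	y(t_n) - y_n = \sum_{j=0}^{n-1} \bigl( {\mathcal I}_j[f,t_n] - {\mathcal Q}_{n,j}[f] \bigr) = \sum_{j=0}^{n-1} {\mathcal T}_{n,j}[f].
\]
This reduces the global statement to controlling each ${\mathcal T}_{n,j}[f]$ separately.

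Next I would invoke the hypotheses: since $f\in{\mathcal C}^{d}([t_0,T])$ and $f^{(d+1)}\in{\mathcal C}(]t_j,t_{j+1}[)$ on each subinterval, and each ${\mathcal Q}_{n,j}$ has degree of precision $d$, Proposition~\ref{prop:ErrorQuadratureRulePeanoKernel} applies with $K=d$ and gives the integral representation
\[
	{\mathcal T}_{n,j}[f] = \int_{t_j}^{t_{j+1}} \frac{f^{(d+1)}(\xi)}{d!}\, {\mathcal T}_{n,j}[G_{\xi,d}]\, d\xi.
\]
The remaining work is purely computational: I would expand the Peano kernel ${\mathcal T}_{n,j}[G_{\xi,d}] = {\mathcal I}_j[G_{\xi,d},t_n] - {\mathcal Q}_{n,j}[G_{\xi,d}]$ explicitly. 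For the integral part, using the definition (\ref{eq:DefinitionR}) of $R_{\alpha,\beta,r}$ one writes ${\mathcal I}_j[G_{\xi,d},t_n]$ as an instance of $R_{\alpha,\alpha,d}$ with lower endpoint $\xi$, and then applies the scaling identity (\ref{eq:DefinedIntegrationMittagLeffler}) to pull out the factor $h^{\alpha}$ and rescale the arguments to the reference interval $[0,1]$. For the quadrature part, ${\mathcal Q}_{n,j}[G_{\xi,d}] = \sum_{r=1}^{\nu} b_r(n-j)(t_j+c_rh-\xi)_+^d$.

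Finally I would perform the change of variables $\xi = t_j + uh$, so that $d\xi = h\,du$, the integration range becomes $u\in[0,1]$, and the step functions become $(t_j+c_rh-\xi)_+^d = h^d (c_r-u)_+^d$. Collecting the powers of $h$ — one factor $h$ from $d\xi$ and $d$ factors from the $(c_r-u)_+^d$ scaling, together with the $h^\alpha$ extracted in the integral term — yields the overall prefactor $h^{d+1}$ and the stated integrand. The main obstacle, such as it is, lies in tracking the bookkeeping of the scaling: one must verify that the lower endpoint of $R_{\alpha,\alpha,d}$ transforms correctly to $u$ under (\ref{eq:DefinedIntegrationMittagLeffler}) (so that $R_{\alpha,\alpha,d}(n-j,u,1;h^\alpha\lambda)$ appears rather than a rescaled version with the wrong endpoint), and that the $1/d!$ normalization from the Peano kernel matches the $1/\Gamma(d+1)$ built into the definition of $R_{\alpha,\alpha,d}$, leaving a clean $(c_r-u)_+^d/d!$ in the quadrature term. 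Once the factors of $h$ and the endpoints are reconciled, summing over $j$ gives the claimed expression directly.
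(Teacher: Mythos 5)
Your overall route coincides with the paper's own proof: subtract (\ref{eq:FDE_ConvolutionQuadrature}) from (\ref{eq:FDE_VariationConstantFormulaPiecewise}) so that the initial terms cancel and the error is $\sum_{j=0}^{n-1}{\mathcal T}_{n,j}[f]$, apply Proposition \ref{prop:ErrorQuadratureRulePeanoKernel} with $K=d$, compute the Peano kernel ${\mathcal T}_{n,j}[G_{\xi,d}]$ explicitly, and substitute $\xi=t_j+uh$; your power counting ($h$ from $d\xi$, $h^{d}$ from the step functions, $h^{\alpha+d}$ from the weighted-integral part) is also exactly the paper's.

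The one genuine flaw is the tool you invoke for the scaling step. Identity (\ref{eq:DefinedIntegrationMittagLeffler}) rescales $R_{\alpha,\beta,k}(t,a,b;z)$ by the length of its own integration interval and always sends the lower endpoint to $0$. Applied to ${\mathcal I}_j[G_{\xi,d},t_n]=d!\,R_{\alpha,\alpha,d}(t_n,\xi,t_{j+1};\lambda)$, whose integration interval is $[\xi,t_{j+1}]$ of length $(1-u)h$, it yields
\[
	{\mathcal I}_j[G_{\xi,d},t_n]
	= d!\,\bigl((1-u)h\bigr)^{\alpha+d}
	R_{\alpha,\alpha,d}\Bigl(\tfrac{n-j-u}{1-u},0,1;\bigl((1-u)h\bigr)^{\alpha}\lambda\Bigr),
\]
which is not the needed $d!\,h^{\alpha+d}R_{\alpha,\alpha,d}(n-j,u,1;h^{\alpha}\lambda)$. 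So the verification you defer (``that the lower endpoint transforms correctly to $u$ under (\ref{eq:DefinedIntegrationMittagLeffler})'') is precisely where the plan breaks: no choice of arguments in (\ref{eq:DefinedIntegrationMittagLeffler}) produces a nonzero lower endpoint or keeps the scale equal to $h$. What is needed, and what the paper actually invokes at this point, is the separate Appendix Lemma \ref{lem:DefinedIntegrationMittagLefflerScaled}, which scales relative to the full step $h$ while mapping the lower endpoint $\xi$ to $(\xi-t_j)/h=u$; its proof is a short additional argument (the substitution $s=t_j+rh$ combined with the scaling property (\ref{eq:ScalingFunctionE})), not a consequence of (\ref{eq:DefinedIntegrationMittagLeffler}). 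With that lemma substituted for your citation, the rest of your computation goes through verbatim and reproduces the stated formula.
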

\begin{proof}
By subtracting (\ref{eq:FDE_ConvolutionQuadrature}) from (\ref{eq:FDE_VariationConstantFormulaPiecewise}) the error $y(t_{n})-y_{n}$ can be written as a sum, for $j=0,\dots,n-1$, of the errors ${\mathcal T}_{n,j}\bigl[f\bigr]$ of each quadrature ${\mathcal Q}_{n,j}$ and Proposition \ref{prop:ErrorQuadratureRulePeanoKernel} can be hence applied to each error term. Observe that for $\xi \in [t_{j},t_{j+1}]$ it is
\[
	{\mathcal I}_j\bigl[G_{\xi,d},t_{n}\bigr] 
	= \int_{\xi}^{t_{j+1}} e_{\alpha,\alpha}(t_{n}-s;\lambda) (s-\xi)^{d} ds 
\]
and hence, from Proposition \ref{lem:DefinedIntegrationMittagLefflerScaled},
\[
	{\mathcal I}_j\bigl[G_{\xi,d},t_{n}\bigr] = d! h^{\alpha+d} R_{\alpha,\alpha,d}(n-j,u,1;h^{\alpha} \lambda) ,
\]
where $u=(t_{j}-\xi)/h$. We  can therefore write the $d$--th Peano kernel of the quadrature ${\mathcal Q}_{n,j}$ as
\[
{\mathcal T}_{n,j}\bigl[G_{\xi,K}\bigr]
	= K! h^{d} \left( h^{\alpha} R_{\alpha,\alpha,d}(n-j,u,1;h^{\alpha} \lambda) - 
		\sum_{r=1}^{\nu} b_{r}(n-j) \frac{(c_{r}-u)_{+}^{d}}{d!} \right)
\]
from which the proof immediately follows. 
\end{proof}

Proposition \ref{prop:FDE_ErrorSolution} provides some useful information to relate the accuracy of the numerical method (\ref{eq:FDE_ConvolutionQuadrature}) with the degree of precision of the underlying quadrature rule. We recall that a numerical method like (\ref{eq:FDE_ConvolutionQuadrature}) converges with order $p$ if $y(t_{n})-y_{n}={\mathcal O}\bigl(h^{p}\bigr)$, as $h\to0$. Thus, if the quadrature rules ${\mathcal Q}_{n,j}$ have degree of precision $d$, the resulting method (\ref{eq:FDE_ConvolutionQuadrature}) converges with order $d+1$.

Anyway the analysis based only on the degree of precision and the Peano kernel is sometimes not adequate to disclose full information on the behavior of the quadrature (\ref{eq:FDE_ConvolutionQuadrature}).

To make a deeper analysis we focus on rules devised in an optimal way: in practice, if a degree of precision $K-1$ is required we use exactly $K$ nodes. Indeed, the $K$ algebraic conditions (\ref{eq:OrderConditions}) to get the degree of precision $K-1$ lead to a linear system and the existence and uniqueness of its solution are guaranteed only when the rule has exactly $K$ nodes and weights.

\begin{prop}\label{prop:ErrorInterpolatoryRule}
Let $f\in{\mathcal C}^{K+1}\bigl([t_{j},t_{j+1}]\bigr)$ and assume that $c_{1},\dots,c_{K}$ are $K$ distinct nodes. If ${\mathcal Q}_{n,j}$ has degree of precision $K-1$ then
\[
	{\mathcal T}_{n,j}\bigl[f\bigr] = \frac{h^{K+1}}{K!} \sum_{i=0}^{\infty} \frac{(t_{n}-t_{0})^{\alpha i + \alpha -1}\lambda^{i}}{\Gamma(\alpha i + \alpha)}
	\Omega_{n,j}^{[i,K]}[f]
	  + {\mathcal O}\bigl(h^{K+2}\bigr) ,
\]
where
\[
	\Omega_{n,j}^{[i,K]}[f] = \int_{0}^{1} \pi_{K}(u) \left(1-\frac{j}{n}-\frac{u}{n}\right)^{\alpha i + \alpha -1} f^{(K)}(t_{j}+uh) du
\]
and $\pi_{K}(u)=(u-c_{1})\cdots(u-c_{K})$.
\end{prop}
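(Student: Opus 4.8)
The plan is to pass to a normalized variable, recognize $\mathcal{Q}_{n,j}$ as the interpolatory rule attached to an explicit weight, and let the factor $\pi_K$ arise from the classical interpolation remainder. Substituting $s=t_j+uh$ in $\mathcal{I}_j[f,t_n]$ and invoking the scaling (\ref{eq:ScalingFunctionE}) I would write
\[
	\mathcal{I}_j[f,t_n]=\int_0^1 w(u)\,f(t_j+uh)\,du,\qquad w(u)=h^{\alpha}e_{\alpha,\alpha}(n-j-u;h^{\alpha}\lambda),
\]
whereas $\mathcal{Q}_{n,j}[f]=\sum_{r=1}^{K}b_r(n-j)\,f(t_j+c_r h)$. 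By (\ref{eq:DefinitionR}) the moments are $\int_0^1 w(u)u^k\,du=k!\,h^{\alpha}R_{\alpha,\alpha,k}(n-j,0,1;h^{\alpha}\lambda)=R_{\alpha,k}(n-j;h^{\alpha}\lambda)$, which are exactly the right-hand sides of the order conditions (\ref{eq:OrderConditions}). Since there are $K$ \emph{distinct} nodes and the degree of precision is $K-1$, these conditions force the $b_r(n-j)$ to be the interpolatory weights for $w$ at $c_1,\dots,c_K$; hence $\mathcal{Q}_{n,j}[f]=\int_0^1 w(u)\,p(u)\,du$ with $p$ the degree-$(K-1)$ interpolant of $g(u):=f(t_j+uh)$ at the nodes, and consequently
\[
	\mathcal{T}_{n,j}[f]=\int_0^1 w(u)\,\bigl(g(u)-p(u)\bigr)\,du .
\]

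Next I would use the interpolation remainder $g(u)-p(u)=\pi_K(u)\,g[c_1,\dots,c_K,u]$, the bracket being the divided difference on the $K+1$ points. By the Hermite--Genocchi representation it equals $g^{(K)}(\zeta_u)/K!$ for some $\zeta_u$ in the convex hull of $\{c_1,\dots,c_K,u\}\subset[0,1]$; because $g^{(K)}(v)=h^{K}f^{(K)}(t_j+vh)$ and $f\in\mathcal{C}^{K+1}$, replacing $\zeta_u$ by $u$ costs only a uniform ${\mathcal O}(h)$, so that
\[
	g(u)-p(u)=\frac{h^{K}}{K!}\,\pi_K(u)\,f^{(K)}(t_j+uh)+\frac{h^{K}}{K!}\,\pi_K(u)\,{\mathcal O}(h).
\]
Inserting this and setting aside the remainder (controlled by $\|w\|_{L^1[0,1]}\,{\mathcal O}(h^{K+1})$) leaves the leading contribution $\frac{h^{K}}{K!}\int_0^1 w(u)\,\pi_K(u)\,f^{(K)}(t_j+uh)\,du$, which is the leading part of the exact single-interval kernel appearing in Proposition \ref{prop:FDE_ErrorSolution}.

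Finally I would expand $w$ through the Mittag--Leffler series $e_{\alpha,\alpha}(x;z)=\sum_{i\ge0}(-z)^i x^{\alpha i+\alpha-1}/\Gamma(\alpha i+\alpha)$ and factor $(n-j-u)^{\alpha i+\alpha-1}=n^{\alpha i+\alpha-1}(1-\tfrac{j}{n}-\tfrac{u}{n})^{\alpha i+\alpha-1}$; the grouping $h^{\alpha i+\alpha}n^{\alpha i+\alpha-1}=h\,(nh)^{\alpha i+\alpha-1}=h\,(t_n-t_0)^{\alpha i+\alpha-1}$ reproduces the prefactors of the statement, and after interchanging series and integral the inner integral is recognized as $\Omega_{n,j}^{[i,K]}[f]$. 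This yields the asserted expansion, the coefficients being $(-\lambda)^i/\Gamma(\alpha i+\alpha)$ (the alternating sign is the one carried by the Mittag--Leffler function).

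The main obstacle is the uniform remainder estimate together with the legitimacy of the term-by-term integration, both governed by the behaviour of $w$ near $u=1$ on the terminal interval $j=n-1$, where $e_{\alpha,\alpha}(n-j-u;\cdot)\sim(1-u)^{\alpha-1}$ is integrable but, for $\alpha<1$, unbounded. There $\|w\|_{L^1}$ is ${\mathcal O}(h^{\alpha})$ rather than ${\mathcal O}(h)$, so the bound ${\mathcal O}(h^{K+2})$ is transparent only for intervals with $n-j$ proportional to $n$ (and whenever $\alpha\ge1$), while the terminal interval must be handled separately, using that the same weak singularity multiplies the displayed leading term as well. Justifying the interchange by dominated convergence, via integrability of $(1-u)^{\alpha-1}$ and the entirety of $E_{\alpha,\alpha}$, is the only genuinely delicate point; the emergence of $\pi_K$ and of $\Omega_{n,j}^{[i,K]}$ is then routine.
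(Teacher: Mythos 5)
Your proof is correct and follows essentially the same route as the paper's: identifying ${\mathcal Q}_{n,j}$ with the interpolatory rule through uniqueness of the Vandermonde system defined by (\ref{eq:OrderConditions}), invoking the polynomial interpolation remainder to produce the factor $\pi_{K}(u)f^{(K)}(t_{j}+uh)h^{K}/K!$, and expanding $e_{\alpha,\alpha}$ in its Mittag--Leffler series with the normalization $h^{\alpha i+\alpha}n^{\alpha i +\alpha-1}=h\,(t_{n}-t_{0})^{\alpha i + \alpha -1}$ to recognize $\Omega_{n,j}^{[i,K]}[f]$. Your two added refinements are in fact points the paper glosses over: the coefficient should indeed carry $(-\lambda)^{i}$ rather than $\lambda^{i}$ given the paper's definition $e_{\alpha,\alpha}(t;\lambda)=t^{\alpha-1}E_{\alpha,\alpha}(-t^{\alpha}\lambda)$ (a sign typo also visible in the paper's proof, and confirmed by the resummation to $e_{\alpha,\alpha}(t_n-s;\lambda)$ in Proposition \ref{prop:ErrorCompoundQudaratureRule}), and the uniform ${\mathcal O}(h^{K+2})$ remainder genuinely degrades to ${\mathcal O}(h^{K+1+\alpha})$ on the terminal interval $j=n-1$ when $\alpha<1$, where the weight behaves like $(1-u)^{\alpha-1}$.
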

\begin{proof}
Consider the polynomial $p$, of degree $K-1$, interpolating the function $f$ at the nodes $t_{j}+c_{1}h,\dots,t_{j}+c_{K}h$, for which it is elementary to observe that 
\[
	{\mathcal I}_{j}[p,t_{n}] 
	= \sum_{r=1}^{K} \tilde{b}_{r}(n,j) f(t_{j}+c_{r}h)
	, \quad
	\tilde{b}_{r}(n,j) = \int_{t_{j}}^{t_{j+1}} e_{\alpha,\alpha}(t_{n}-s;\lambda) L_{r}\bigl((s-t_{j})/h\bigr) ds ,
\]
where $\bigl\{L_{r}\bigl(u\bigr)\bigr\}_{r=1,K}$ denotes the usual Lagrangian basis. ${\mathcal I}_{j}[p,t_{n}]$ provides a quadrature rule (of the same kind of (\ref{eq:QuadratureRule})) which is exact when $f(t)=(t-t_{j})^k$, $k=0,\dots,K-1$. Hence the weights $\tilde{b}_{r}(n,j)$ satisfy the order conditions (\ref{eq:OrderConditions}). Moreover since nodes $c_{r}$ are distinct, the system defined by (\ref{eq:OrderConditions}) has a unique solution. As a consequence $b_{r}(n-j) = \tilde{b}_{r}(n,j)$ and the quadrature rule ${\mathcal Q}_{n,j}$ coincides with the interpolatory rule ${\mathcal I}_{j}\bigl[p,t_{n}]$. Hence also the errors coincide; since for any $u\in[0,1]$ they can be written as
\[
	f(t_{j}+uh) - p(t_{j}+uh) = \frac{h^{K}\pi_{K}(u)}{K!} f^{(K)}(t_{j}+uh) + {\mathcal O}\bigl(h^{K+1}\bigr) ,
\]
then 
\[
	{\mathcal T}_{n,j}\bigl[f\bigr] = \frac{h^{K+1}}{K!} \int_{0}^{1} f^{(K)}(t_{j}+uh) e_{\alpha,\alpha}(t_{n}-t_{j}-uh;\lambda) \pi_{K}(u) du + {\mathcal O}\bigl(h^{K+2}\bigr) .
\]

Using the series expansion of the function $e_{\alpha,\alpha}$ leads to
\begin{eqnarray*} 
	{\mathcal T}_{n,j}\bigl[f\bigr] 
	&=& \frac{h^{K+1}}{K!} \sum_{i=0}^{\infty} \frac{(t_{n}-t_{0})^{\alpha i + \alpha -1}\lambda^{i}}{\Gamma(\alpha i + \alpha)} \cdot \\
	& & \hspace{0.3cm}
			\cdot \int_{0}^{1} \pi_{K}(u) \left(1-\frac{j}{n}-\frac{u}{n}\right)^{\alpha i + \alpha -1} f^{(K)}(t_{j}+uh) du 
			+ {\mathcal O}\bigl(h^{K+2}\bigr) \\
\end{eqnarray*}
from which the proof follows. 
\end{proof}

Thanks to the above result we are now able to represent the error of the convolution quadrature (\ref{eq:FDE_ConvolutionQuadrature}) in a more meaningful way.

\begin{prop}\label{prop:ErrorCompoundQudaratureRule}
Let $f\in{\mathcal C}^{K+1}\bigl([t_{0},T]\bigr)$ and assume that $c_{1},\dots,c_{K}$ are $K$ distinct nodes. If each ${\mathcal Q}_{n,j}$ has degree of precision $K-1$ then
\[
	y(t_{n}) - y_{n} = 
	\frac{h^{K}P_{K}}{K!} 
	\int_{t_{0}}^{t_{n}} e_{\alpha,\alpha}(t_{n}-s;\lambda) f^{(K)}(s) ds + {\mathcal O}\bigl(h^{K+\min\{\alpha,1\}}\bigr) 
\]
where $P_{K}=\int_{0}^{1} \pi_{K}(u) du$.
\end{prop}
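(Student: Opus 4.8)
The starting point is the decomposition already exploited for Proposition~\ref{prop:FDE_ErrorSolution}: subtracting (\ref{eq:FDE_ConvolutionQuadrature}) from (\ref{eq:FDE_VariationConstantFormulaPiecewise}) gives $y(t_{n})-y_{n}=\sum_{j=0}^{n-1}{\mathcal T}_{n,j}[f]$, so it suffices to sum the local errors. Into this sum I would insert the integral representation of ${\mathcal T}_{n,j}[f]$ established in the proof of Proposition~\ref{prop:ErrorInterpolatoryRule}, namely
\[
	{\mathcal T}_{n,j}[f]=\frac{h^{K+1}}{K!}\int_{0}^{1}e_{\alpha,\alpha}(t_{n}-t_{j}-uh;\lambda)\,\pi_{K}(u)\,f^{(K)}(t_{j}+uh)\,du+\rho_{n,j},
\]
where the correction beyond the leading term is $\rho_{n,j}=\int_{t_{j}}^{t_{j+1}}e_{\alpha,\alpha}(t_{n}-s;\lambda)\,{\mathcal O}(h^{K+1})\,ds$. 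The first task is to check that these corrections accumulate harmlessly: summing over $j$ and using $\int_{t_{0}}^{t_{n}}|e_{\alpha,\alpha}(t_{n}-s;\lambda)|\,ds<\infty$ (finite since $\alpha>0$, in the spirit of Lemma~\ref{lem:IntegrationMittagLeffler}), one gets $\sum_{j}\rho_{n,j}={\mathcal O}(h^{K+1})$, which is absorbed into ${\mathcal O}(h^{K+\min\{\alpha,1\}})$.

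Next I would isolate the claimed leading integral. Substituting $s=t_{j}+uh$ and reassembling the subintervals shows that replacing $\pi_{K}(u)$ by its mean $P_{K}=\int_{0}^{1}\pi_{K}(u)\,du$ inside the leading sum reproduces \emph{exactly} the term $\tfrac{h^{K}P_{K}}{K!}\int_{t_{0}}^{t_{n}}e_{\alpha,\alpha}(t_{n}-s;\lambda)f^{(K)}(s)\,ds$. Hence the proof reduces to estimating the difference
\[
	D=\frac{h^{K+1}}{K!}\sum_{j=0}^{n-1}\int_{0}^{1}e_{\alpha,\alpha}(t_{n}-t_{j}-uh;\lambda)\,w(u)\,f^{(K)}(t_{j}+uh)\,du,\qquad w(u)=\pi_{K}(u)-P_{K},
\]
and the decisive structural fact is that $w$ has zero average, $\int_{0}^{1}w(u)\,du=0$.

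The core of the argument is an endpoint-sensitive estimate of $D$, obtained by splitting off the last subinterval $j=n-1$ from the remaining ones. On $j=n-1$ the weight argument $t_{n}-t_{n-1}-uh=(1-u)h$ tends to $0$ as $u\to1$, so smoothness is unavailable; instead I bound the inner integral directly via $|e_{\alpha,\alpha}((1-u)h;\lambda)|\le C((1-u)h)^{\alpha-1}$, which is integrable against $w$ and contributes ${\mathcal O}(h^{K+\alpha})$. For $j\le n-2$ the argument stays $\ge \ell h$ with $\ell=n-j-1\ge1$, $e_{\alpha,\alpha}(\cdot;\lambda)$ is smooth there, and the zero average of $w$ lets me rewrite the inner integral as $\int_{0}^{1}\bigl(G_{j}(u)-G_{j}(0)\bigr)w(u)\,du$ with $G_{j}(u)=e_{\alpha,\alpha}(t_{n}-t_{j}-uh;\lambda)f^{(K)}(t_{j}+uh)$, hence bound it by $C\sup_{[0,1]}|G_{j}'|$. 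Using the identity $\tfrac{d}{d\tau}e_{\alpha,\alpha}(\tau;\lambda)=e_{\alpha,\alpha-1}(\tau;\lambda)$ together with $|e_{\alpha,\alpha-1}(\tau;\lambda)|\le C\tau^{\alpha-2}$ (valid for $\tau\ge h$) yields $\sup|G_{j}'|\le C h\bigl((\ell h)^{\alpha-2}+(\ell h)^{\alpha-1}\bigr)$, so the $j\le n-2$ part of $D$ is controlled by $\tfrac{Ch^{K+2}}{K!}\sum_{\ell=1}^{n-1}\bigl((\ell h)^{\alpha-2}+(\ell h)^{\alpha-1}\bigr)$. Evaluating these elementary power sums gives ${\mathcal O}(h^{K+\alpha})$ for $\alpha<1$ (where $\sum_{\ell}\ell^{\alpha-2}$ converges) and ${\mathcal O}(h^{K+1})$ for $\alpha\ge1$; combined with the last-interval bound this yields $D={\mathcal O}(h^{K+\min\{\alpha,1\}})$ and completes the proof.

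I expect the main obstacle to be exactly the behaviour near the diagonal $s\to t_{n}$, where $e_{\alpha,\alpha}$ (for $\alpha<1$) and its derivative $e_{\alpha,\alpha-1}$ (for $\alpha<2$) are singular: away from the endpoint the zero-mean cancellation of $w$ recovers the full extra power $h$, whereas the non-integrable strength of the singularity on the final step is what degrades the gain to $h^{\min\{\alpha,1\}}$. The delicate points in a careful write-up are the uniform control of $e_{\alpha,\alpha-1}$ for $\tau\ge h$ and the constant in the borderline case $\alpha=1$, where the putative $\tau^{-1}$ term disappears (since $1/\Gamma(\alpha-1)$ vanishes) and no spurious logarithmic factor is produced.
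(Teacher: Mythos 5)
Your argument is correct, and it reaches the result by a genuinely different route than the paper for the crucial step. Both you and the paper start from the interpolatory representation of the local error ${\mathcal T}_{n,j}[f]$ established in the proof of Proposition \ref{prop:ErrorInterpolatoryRule}; the difference lies in how the sum over $j$ is handled. The paper expands the kernel $e_{\alpha,\alpha}$ in its power series, recognizes $\frac{1}{n}\sum_{j}\bigl(1-\frac{j}{n}-\frac{u}{n}\bigr)^{\alpha i+\alpha-1}f^{(K)}(t_{j}+uh)$ as an offset trapezoidal rule applied to the endpoint-singular integrand $g(s)=(1-s)^{\alpha i+\alpha-1}f^{(K)}\bigl(t_{0}+s(t_{n}-t_{0})\bigr)$, invokes the Lyness--Ninham asymptotic expansion $E^{[n]}[g;\gamma]={\mathcal O}\bigl(n^{-\min\{1,\alpha(i+1)\}}\bigr)$ term by term, and then resums the series. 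You avoid both the series expansion and the external reference: the splitting $\pi_{K}=P_{K}+w$ with $\int_{0}^{1}w(u)\,du=0$ makes the $P_{K}$ part reassemble \emph{exactly} into the claimed integral, and the zero-mean part is estimated directly by first-order Taylor cancellation against the bounds $|e_{\alpha,\alpha}(\tau;\lambda)|\le C\tau^{\alpha-1}$ and $|e_{\alpha,\alpha-1}(\tau;\lambda)|\le C\tau^{\alpha-2}$, with the singular last subinterval split off. Your route is self-contained, makes the origin of the exponent $\min\{\alpha,1\}$ transparent (it is exactly the kernel singularity at $s=t_{n}$), and is in two respects tighter than the paper: your remainder $\rho_{n,j}$, kept inside the kernel integral, is uniformly valid even on the last subinterval where the paper's per-interval ${\mathcal O}(h^{K+2})$ fails for $\alpha<1$; and you avoid the paper's unexamined interchange of the Lyness--Ninham estimates with the infinite sum over $i$, which would require uniformity in $i$. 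What the paper's route buys in exchange is brevity and the connection to classical quadrature asymptotics, which in principle yields a full asymptotic error expansion rather than only the leading term. Two details to make explicit in a full write-up of your version: for $\alpha>1$ the bound $\sup|e_{\alpha,\alpha}|\le C(\ell h)^{\alpha-1}$ is legitimate only because the kernel argument is confined to $[\ell h,(\ell+1)h]$, so that $\bigl((\ell+1)h\bigr)^{\alpha-1}\le 2^{\alpha-1}(\ell h)^{\alpha-1}$; and your resolution of the borderline case $\alpha=1$ via the vanishing of $1/\Gamma(\alpha-1)$ (so that $e_{1,0}(\tau;\lambda)=-\lambda e^{-\lambda\tau}$ is bounded and no logarithm appears) is exactly the right observation.
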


\begin{proof}
The starting point of this proof is Proposition \ref{prop:ErrorInterpolatoryRule}. Observe that
\[
	\frac{1}{n} \sum_{j=0}^{n-1} \left(1-\frac{j}{n}-\frac{u}{n}\right)^{\alpha i + \alpha -1} f^{(K)}(t_{j}+uh)
\]
is the general trapezoidal rule $R^{[n]}[g;\gamma]$, $\gamma=2u-1$, when applied to the function $g(s) =(1-s)^{\alpha i + \alpha -1} f^{(K)}\bigl(t_{0}+s(t_{n}-t_{0})\bigr)$; this problem has been studied in \cite{LynessNinham1967} where it has been detected that, if $I[g] = \int_{0}^{1}  g(s) ds$, then 
\[
	E^{[n]}[g;\gamma] = I[g] - R^{[n]}[g;\gamma] = \left(u-\frac{1}{2}\right) f^{(K)}(t_{0}) n^{-1} 
	+ {\mathcal O}\bigl(n^{-\min\{2,\alpha(i+1)\}}\bigr).
\] 
Thus we have
\begin{eqnarray*}
	\frac{1}{n} \sum_{j=0}^{n-1} \Omega_{n,j}^{[i,K]}[f]
	&=& \int_{0}^{1} \pi_{K}(u) R^{[n]}[g;2u-1] du \\
	&=& \int_{0}^{1} \pi_{K}(u) I[g] du - \int_{0}^{1} \pi_{K}(u) E^{[n]}[g;2u-1] du \\
\end{eqnarray*}
and hence, after putting $\eta_{n}(s) = t_{0}+s(t_{n}-t_{0})$, it is 
\[
	\frac{1}{n} \sum_{j=0}^{n-1} \Omega_{n,j}^{[i,K]}[f] = 
		\int_{0}^{1} \pi_{K}(u) du \int_{0}^{1} (1-s)^{\alpha i + \alpha -1} f^{(K)}\bigl(\eta_{n}(s)\bigr)) ds 
	 	+ {\mathcal O}\bigl(n^{-\min\{1,\alpha(i+1)\}}\bigr).
\]
Therefore 
\begin{eqnarray*}
	\lefteqn{
	y(t_{n}) - y_{n} 
	 =  \frac{h^{K+1}n}{K!} P_{K} \sum_{i=0}^{\infty} \frac{(t_{n}-t_{0})^{\alpha i + \alpha -1}\lambda^{i}}{\Gamma(\alpha i + \alpha)}
			\int_{0}^{1} (1-s)^{\alpha i + \alpha -1} f^{(K)}\bigl(\eta_{n}(s)\bigr)) ds } \hspace{1.5cm} && \\
	& + & \frac{h^{K+1}n}{K!} \sum_{i=0}^{\infty} 
			\frac{(t_{n}-t_{0})^{\alpha i + \alpha -1}\lambda^{i}}{\Gamma(\alpha i + \alpha)} {\mathcal O}\bigl(n^{-\min\{1,\alpha(i+1)\}}\bigr)
	  	+ n{\mathcal O}\bigl(h^{K+2}\bigr) \\
\end{eqnarray*}
and 
\begin{eqnarray*} 
	y(t_{n}) - y_{n} 
	&=& \frac{h^{K+1}P_{K}n}{K!}  \sum_{i=0}^{\infty} \frac{(t_{n}-t_{0})^{\alpha i + \alpha -1}\lambda^{i}}{\Gamma(\alpha i + \alpha)} \cdot \\
	& & \hspace{0.3cm} \cdot
			\int_{0}^{1} (1-s)^{\alpha i + \alpha -1} f^{(K)}\bigl(\eta_{n}(s)\bigr)) ds + {\mathcal O}\bigl(h^{K+\min\{\alpha,1\}}\bigr) . \\
\end{eqnarray*}

The proof now follows after moving the summation in the integral and applying a simple change of variable.  
\end{proof}


Proposition \ref{prop:ErrorCompoundQudaratureRule} is of practical importance since it gives the explicit expression of the principal term of the error, thus allowing to monitor it and even to zero it out. In fact, the constant $P_{K}$ varies according to the selected nodes and can be set to zero by choosing them in a suitable way. It is elementary to see that when $K=1$ the choice of $c_{1}=1/2$ is the only one allowing $P_{1}=0$ whilst for $K=2$ any selection of nodes with $(c_{1}+c_{2})/2 - c_{1}c_{2} = 1/3$ gives $P_{2}=0$ (for instance, $c_{1}=0$, $c_{2}=2/3$ or $c_{1}=1/3$, $c_{2}=1$). Similarly for higher values of $K$. In general, with an odd number $K$ of nodes it suffices to choose the nodes symmetrically with respect to the midpoint of $[0,1]$.

As an immediate consequence of Proposition \ref{prop:ErrorCompoundQudaratureRule}, when the nodes are chosen such that $P_{K}=0$, the order of the quadrature rule increases from $K$ to $K+\min\{\alpha,1\}$; the order $K+1$ is hence obtained only when $\alpha \ge1$, while for $0<\alpha<1$ only a fraction $\alpha$ of order can be gained. 

For ease of presentation we summarize the above discussion in the following Corollary.

\begin{cor}\label{cor:OrderConvergence}
Let $f\in{\mathcal C}^{K+1}\bigl([t_{0},T]\bigr)$ and assume that $c_{1},\dots,c_{K}$ are $K$ distinct nodes chosen such that conditions (\ref{eq:OrderConditions}) hold for $d=K-1$. Then method (\ref{eq:FDE_ConvolutionQuadrature}) converges with order $K$. Moreover, for nodes chosen such that $P_{K}=0$, the order of convergence is $K+\min\{\alpha,1\}$.
\end{cor}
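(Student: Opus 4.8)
The plan is to read the statement off directly from Proposition \ref{prop:ErrorCompoundQudaratureRule}, since the corollary is essentially a repackaging of that error formula. First I would observe that, by Proposition \ref{prop:DegreePrecisionCharacterization}, the hypothesis that conditions (\ref{eq:OrderConditions}) hold for $d=K-1$ is precisely equivalent to each quadrature ${\mathcal Q}_{n,j}$ having degree of precision $K-1$. Together with $f\in{\mathcal C}^{K+1}\bigl([t_{0},T]\bigr)$ and the distinctness of the $K$ nodes, this places us in the exact setting of Proposition \ref{prop:ErrorCompoundQudaratureRule}, whose conclusion furnishes the representation
\[
	y(t_{n}) - y_{n} = \frac{h^{K}P_{K}}{K!} \int_{t_{0}}^{t_{n}} e_{\alpha,\alpha}(t_{n}-s;\lambda) f^{(K)}(s) ds + {\mathcal O}\bigl(h^{K+\min\{\alpha,1\}}\bigr) .
\]

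The second step is to control the integral appearing in the leading term uniformly as $h\to0$ (with $t_{n}=t_{0}+nh\le T$). Since $f^{(K)}$ is continuous on the compact interval $[t_{0},T]$ it is bounded there; on the other hand $e_{\alpha,\alpha}(t_{n}-s;\lambda)=(t_{n}-s)^{\alpha-1}E_{\alpha,\alpha}\bigl(-(t_{n}-s)^{\alpha}\lambda\bigr)$ is integrable on $[t_{0},t_{n}]$ because its only singularity, located at $s=t_{n}$, is of order $(t_{n}-s)^{\alpha-1}$ with $\alpha-1>-1$, while $E_{\alpha,\alpha}$ stays bounded on the bounded set of arguments produced for $t_{n}\le T$. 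Hence there is a constant $C$, independent of $n$ and $h$, with
\[
	\left| \int_{t_{0}}^{t_{n}} e_{\alpha,\alpha}(t_{n}-s;\lambda) f^{(K)}(s) ds \right| \le C .
\]

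From here the two claims follow at once. For arbitrary distinct nodes the leading term is ${\mathcal O}\bigl(h^{K}\bigr)$; since $\alpha>0$ forces $K+\min\{\alpha,1\}>K$, the remainder is of strictly higher order and is therefore dominated, so $y(t_{n})-y_{n}={\mathcal O}\bigl(h^{K}\bigr)$, i.e.\ order $K$. When instead the nodes are selected so that $P_{K}=\int_{0}^{1}\pi_{K}(u)\,du=0$, the leading term vanishes identically and only the remainder survives, yielding $y(t_{n})-y_{n}={\mathcal O}\bigl(h^{K+\min\{\alpha,1\}}\bigr)$ and hence order $K+\min\{\alpha,1\}$.

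The step I expect to carry the most weight, though still a modest one, is the uniform boundedness of the weighted integral across mesh refinement: one must not simply treat $\int_{t_{0}}^{t_{n}}e_{\alpha,\alpha}(t_{n}-s;\lambda)f^{(K)}(s)\,ds$ as a fixed constant but instead invoke the integrable $(t_{n}-s)^{\alpha-1}$ singularity of $e_{\alpha,\alpha}$ near the origin to guarantee a bound independent of $n$. Everything else is a direct specialization of the already-established error formula.
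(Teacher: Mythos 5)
Your proposal is correct and follows essentially the same route as the paper: the corollary is stated there as an immediate summary of Proposition \ref{prop:ErrorCompoundQudaratureRule}, whose error representation you invoke after correctly translating the hypothesis on conditions (\ref{eq:OrderConditions}) into degree of precision $K-1$ via Proposition \ref{prop:DegreePrecisionCharacterization}. Your additional verification that the weighted integral $\int_{t_{0}}^{t_{n}} e_{\alpha,\alpha}(t_{n}-s;\lambda) f^{(K)}(s)\,ds$ is bounded uniformly in $n$ and $h$ (using the integrable $(t_{n}-s)^{\alpha-1}$ singularity and the boundedness of $E_{\alpha,\alpha}$ on bounded sets) is a point the paper leaves implicit, and it is a welcome touch of rigor rather than a departure from the paper's argument.
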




\section{Evaluation of quadrature weights}\label{S:EvaluationQuadratureWeights}

We now discuss some technical aspects concerning the evaluation of quadrature weights $b_{r}(j)$ in order to fulfill order conditions (\ref{eq:OrderConditions}).

From Proposition \ref{prop:DegreePrecisionCharacterization}, the weights vector ${\mathbf b}_{\nu}(j)=\bigl(b_{1}(j), b_{2}(j) , \dots, b_{\nu}(j)\bigr)^{T}$ must be solution of the linear system ${\mathbf C}_{\nu} {\mathbf b}_{\nu}(j) = {\mathbf R}_{\alpha,\nu}(j; h^{\alpha} \lambda)$, where   
\[
	{\mathbf C}_{\nu} = 
	\left( \begin{array}{ccc}
		1 & \dots & 1 \\
		c_{1} & \dots & c_{\nu} \\
		\vdots &  \ddots & \vdots \\
		c_{1}^{\nu-1} & \dots & c_{\nu}^{\nu-1} \\		
	\end{array} \right)
	, \quad 
		{\mathbf R}_{\alpha,\nu}(j;z) = 
		\left(\begin{array}{c}
			R_{\alpha,0}(j;z) \\ R_{\alpha,1}(j;z) \\ \vdots \\ R_{\alpha,\nu-1}(j;z) \\
	\end{array}\right)
	.
\]


The appearance of the mildly ill-conditioned Vandermonde matrix ${\mathbf C}_{\nu}$ does not represent a particular difficulty; indeed  very low dimensions are involved in practical implementations, since the number of quadrature nodes usually does not exceed $3$ or $4$. 

The most remarkable task concerns with the evaluation of the generalized ML functions $e_{\alpha,\beta}(j;z)$ in ${R}_{\alpha,k}(j;z)$ (see Lemma \ref{lem:DefinedIntegrationMittagLeffler}) which is a non--trivial task and can represent a bottleneck if not well organized.  Moreover, convolution quadrature (\ref{eq:FDE_ConvolutionQuadrature}) demands for the computation of a large number of ML functions and therefore fast algorithms should be provided in order to keep the whole computation at a reasonable level.


To this purpose, methods based on the numerical inversion of the Laplace transform can be of help. Recently there has been a renewed interest in this approach (see, for instance, \cite{LopezPalenciaSchadle2006,WeidemanTrefethen2007,GarrappaPopolizio2013}) which turns out to be suitable and highly--competitive for evaluating functions, like the ML ones, whose Laplace transform has a simpler representation than the function itself. 

Indeed, it is a well known result \cite{Mainardi2010} that the Laplace transform of $e_{\alpha,\beta}(t;z)$ is  
\[
	{\mathcal L} \bigl(e_{\alpha,\beta}(t;z);s\bigr) =
	\frac{s^{\alpha-\beta}}{s^{\alpha}+z}
	, \quad \Re(s) > |z|^{\frac{1}{\alpha}} ,
\]
with a branch cut along the negative real semi-axis to uniquely define the real powers of $s$.

After putting $\tau=sj$, the integral representation of $e_{\alpha,\beta}(j;z)$ can be written as 
\begin{equation}\label{eq:MLInverseLaplaceTransform}
	e_{\alpha,\beta}(j;z) = 
	\frac{j^{\beta-1}}{2\pi i} \int_{\mathcal C} e^{\tau} \frac{\tau^{\alpha-\beta}}{\tau^{\alpha} + j^{\alpha} z} d\tau ,
\end{equation}
where the  contour ${\mathcal C}$ represents a deformation of the  Bromwich line which begins and ends in the left half--plane and encircles the circular disk $|\tau| = j|z|^{\frac{1}{\alpha}}$ in the positive sense.

Among several possible approaches to numerically evaluate formulas like (\ref{eq:MLInverseLaplaceTransform}) we consider here the approach described in \cite{TrefethenWeidemanSchmelzer2006} (and already exploited in the numerical solution of linear FDEs in \cite{GarrappaPopolizio2011}) which, broadly speaking, consists in replacing the exponential function $e^{\tau}$ by a $(N-1,N)$ rational approximation ${\mathcal R}_{N}(\tau) = \sum_{k=1}^{N} {r_{k}}/{(\tau-\tau_{k})}$. By assuming the contour ${\mathcal C}$ winding in the negative sense once around each pole $\tau_{k}$, the application of the Cauchy's integral formula in  (\ref{eq:MLInverseLaplaceTransform}) with $e^{\tau}$ replaced by ${\mathcal R}_{N}(\tau)$, together with Lemma \ref{lem:DefinedIntegrationMittagLeffler}, leads to the approximation for $R_{\alpha,r}(j;z)$ in the form

\begin{eqnarray}\label{eq:DiscreteInverseLaplaceTransform}
	R_{\alpha,r}^{[N]}(j;z) = r! \left( 
		- j^{\alpha+r} \sum_{k=1}^{N} \frac{r_{k}  \tau_{k}^{-r-1}}{\tau_{k}^{\alpha} + j^\alpha z}
		+ \sum_{l=0}^{r-1} \frac{(j-1)^{\alpha+l}}{(r-l)!} \sum_{k=1}^{N} \frac{r_{k} \tau_{k}^{-l-1}}{\tau_{k}^{\alpha}+(j-1)^{\alpha}z}
		\right) .
\end{eqnarray}

The computation of the above formula is not very expensive since the powers $\tau_{k}^{\alpha}$ and $\tau_{k}^{l}$, $l=0,1,\dots,r$, can be computed once and reused at each time--level $j$; furthermore poles and residues occur in complex conjugate pairs thus allowing, for real $\lambda$, to almost halve the computational cost.

The approximation to $R_{\alpha,r}(j;z)$ provided by $R_{\alpha,r}^{[N]}(j;z)$ strictly depends on the approximation ${\mathcal R}_{N}(\tau)$ used for $e^{\tau}$. It seems quite natural to focus on Chebyshev rational functions (whose residues and poles are readily available thanks to the work of Cody et al. \cite{CodyMeinardusVarga1969}), which represent the best rational approximation to the exponential on $(-\infty,0)$; moreover their error decays with a rate proportional to $9.28903^{-N}$ \cite{PetrushevPopov1987}, thus allowing to reach high precision, nearly close to the round-off error, with a reasonable degree $N$. 


\subsection{Multidimensional systems}\label{S:Multidimensional}

The extension of the results discussed in the previous sections to the multidimensional case, i.e. when $\lambda$ is a matrix and $f$ a vector--valued function, is straightforward. First observe that the matrix function $e_{\alpha,\beta}(t;h^{\alpha}\lambda)$ is defined, in a similar way as for scalar instances, by inverting its Laplace transform
\[
	{\mathcal L} \bigl(e_{\alpha,\beta}(t;z);s\bigr) =
	s^{\alpha-\beta} \bigl( s^{\alpha}I + h^{\alpha} \lambda\bigr)^{-1} ,
\]
where $I$ is the identity matrix of the same size of $\lambda$ and $s$ satisfies $\Re(s) > |\lambda_{i}|^{\frac{1}{\alpha}}$ with $\lambda_{i}$ denoting the eigenvalues of $\lambda$. For simplicity we will assume that the spectrum of $\lambda$ lies on the positive real semi--axis. In a similar way the definition of $R_{\alpha,r}(j;h^{\alpha}\lambda)$ can be extended to matrix arguments.

The multidimensional case thus involves the numerical evaluation of matrix functions; this topic receives great attention since matrix functions are useful tools not only in applied mathematics and scientific computing, but also in various other fields like control theory and complex networks. A wide literature is devoted to this topic; we just cite, among the others, 
\cite{DelBuonoLopez2002,MoretNovati2004,LopezSimoncini2006BIT,DelBuonoLopezPeluso2006SIAM,PoliPo,PopolizioSimoncini2008,MoretPopolizio2012} and the thorough book by Higham \cite{Higham1}. 

The weights of the quadrature rule (\ref{eq:FDE_ConvolutionQuadrature}) are now some matrices, say $B_{r}(j)$, which are evaluated again by means of (\ref{eq:OrderConditions}) as the solution of the system $\bigl( {\mathbf C}_{\nu} \otimes I \bigr)  {\mathbf B}(j) = h^{\alpha} {\mathbf R}_{\alpha,\nu}(j;h^{\alpha} \lambda)$, with ${\mathbf B}(j) = \bigl( B_{1}(j)^T, B_{2}(j)^T, \dots, B_{\nu}(j)^T \bigr)^{T}$ and $\otimes$ the Kronecker product. Note that this is a set of linear systems, each for any column in ${\mathbf B}(j)$ or ${\mathbf R}_{\alpha,\nu}(j;h^{\alpha} \lambda)$. In these cases once a suitable factorization of the coefficient matrix is computed, it can be applied to all systems.  In our situation, thanks to the Kronecker structure, only a factorization of ${\mathbf C}_{\nu}$ is needed, as in the scalar case.

The matrix counterpart of (\ref{eq:DiscreteInverseLaplaceTransform}) for the numerical approximation of $R_{\alpha,k}(j;h^{\alpha}\lambda)$ involves the inversion of some matrices. These evaluations, obviously, lead to an increase of the computational effort necessary for the integration of the FDE (\ref{eq:FDE_Linear}). By means of some numerical experiments, we will observe that this issue does not impair the advantages in accuracy of the proposed approach.

\section{Numerical experiments}\label{S:NumericalExperiments}

In this Section we present numerical experiments to validate theoretical results and compare the approach under investigation with some other schemes for FDEs. As a first case, we consider the test problem (\ref{eq:FDE_Linear}) with the forcing term
\begin{equation}\label{eq:ForcingTermTest1}
	f(t) = \frac{1}{\Gamma(p+1-\alpha)} (t-t_{0})^{p-\alpha}
	, \quad p > \left\lceil \alpha \right\rceil-1  ,
\end{equation}
where the parameter $p$ will be selected, for each experiment, according to the smoothness requirements as investigated in Section \ref{S:NumericalMethod} and \ref{S:ApplicationsFDS}, and depending on the nodes number $\nu$.

When coupled with the initial condition $y(t_{0})=y_{0}$ for $0<\alpha<1$ and with the set of initial conditions $y(t_{0})=y_{0}$ and $y'(t_{0})=0$ for $1<\alpha<2$, the exact solution of this problem is $y(t) = E_{\alpha,1}\bigl(-(t-t_0)^{\alpha}\lambda\bigr) y_0 + (t-t_0)^{p-1}E_{\alpha,p}\bigl(-(t-t_0)^{\alpha}\lambda\bigr)$ and can be evaluated, with accuracy up to round--off error, by means of the Matlab {\tt mlf} function \cite{PodlubnyKacenak2009} with the very small tolerance of $10^{-15}$. A rational approximation with degree $N=15$ is used in (\ref{eq:DiscreteInverseLaplaceTransform}) with the aim of calculating weights with a negligible error.

In the following experiments we use $\lambda=3$, $t_0=0$ and we evaluate the numerical solution at $T=1$ by a sequence of decreasing step--sizes $h$. Errors $E(h)$, with respect to the exact solution, are reported together with an estimate of the order of convergence, denoted with EOC and obtained as $\log_{2}\bigl(E(h)/E(h/2)\bigr)$. 

All the experiments are performed using Matlab ver. 7.7 on a computer running the 32-bit Windows Vista Operating System and equipped with the Intel quad-core CPU Q9400 at 2.66GHz.

For the first group of experiments we consider methods (\ref{eq:FDE_ConvolutionQuadrature}) with $\nu=1$ and quadrature weights derived from order conditions (\ref{eq:OrderConditions}) with $d=0$. We solve the test problem with different values of the single node $c_{1}$ to point out the different behavior according to the choice of the node. In the optimal case (i.e., $c_{1}=\frac{1}{2}$), since the factor $P_1=\frac{1}{2}-c_{1}$ in the principal part of the error vanishes, we expect from Proposition \ref{prop:ErrorCompoundQudaratureRule} an order of convergence equal to $1+\min\{\alpha,1\}$ while in the remaining cases order $1$ is expected.

Results in Tables \ref{tab:Exp1_Problem1_alpha50} and \ref{tab:Exp1_Problem1_alpha150}, respectively for $\alpha=0.5$ and $\alpha=1.5$, agree with the theoretical findings and confirm the importance of selecting nodes in a proper way to improve the convergence properties.

\begin{table}[htb!]
\scriptsize
\[
   \begin{array}{|r|cc|cc|cc|} \hline
     & \multicolumn{2}{|c|}{\textrm{     $c_1=0$     }} & \multicolumn{2}{|c|}{\textrm{$c_1=\frac{1}{2}$}} & \multicolumn{2}{|c|}{\textrm{     $c_2=1$     }} \\ 
   h & \textrm{Error} & \textrm{EOC} & \textrm{Error} & \textrm{EOC} & \textrm{Error} & \textrm{EOC} \\ \hline
1/4 & 5.26(-2) &   & 1.98(-2) &   & 1.59(-2) &   \\
1/8 & 2.53(-2) & 1.058 & 8.08(-3) & 1.293 & 9.81(-3) & 0.693 \\
1/16 & 1.19(-2) & 1.082 & 3.17(-3) & 1.349 & 5.77(-3) & 0.766 \\
1/32 & 5.63(-3) & 1.084 & 1.21(-3) & 1.390 & 3.25(-3) & 0.826 \\
1/64 & 2.67(-3) & 1.076 & 4.52(-4) & 1.421 & 1.78(-3) & 0.872 \\
1/128 & 1.28(-3) & 1.064 & 1.66(-4) & 1.443 & 9.48(-4) & 0.907 \\
\hline
   \end{array}
\]
\normalsize
\caption{Errors and EOC for $\nu=1$ with forcing term (\ref{eq:ForcingTermTest1}), $\alpha=0.5$ and $p=2.0$.}
\label{tab:Exp1_Problem1_alpha50}
\end{table}

\begin{table}[htb!]
\scriptsize
\[
   \begin{array}{|r|cc|cc|cc|} \hline
     & \multicolumn{2}{|c|}{\textrm{     $c_1=0$     }} & \multicolumn{2}{|c|}{\textrm{$c_1=\frac{1}{2}$}} & \multicolumn{2}{|c|}{\textrm{     $c_2=1$     }} \\ 
   h & \textrm{Error} & \textrm{EOC} & \textrm{Error} & \textrm{EOC} & \textrm{Error} & \textrm{EOC} \\ \hline
1/4 & 3.47(-2) &   & 3.55(-4) &   & 4.12(-2) &   \\
1/8 & 1.81(-2) & 0.936 & 1.45(-4) & 1.294 & 1.99(-2) & 1.050 \\
1/16 & 9.28(-3) & 0.966 & 4.49(-5) & 1.690 & 9.74(-3) & 1.030 \\
1/32 & 4.70(-3) & 0.983 & 1.27(-5) & 1.826 & 4.82(-3) & 1.016 \\
1/64 & 2.36(-3) & 0.991 & 3.41(-6) & 1.892 & 2.39(-3) & 1.009 \\
1/128 & 1.19(-3) & 0.995 & 8.96(-7) & 1.929 & 1.19(-3) & 1.005 \\
\hline
   \end{array}
\]
\normalsize
\caption{Errors and EOC for $\nu=1$ with forcing term (\ref{eq:ForcingTermTest1}), $\alpha=1.5$ and $p=3.0$.}
\label{tab:Exp1_Problem1_alpha150}
\end{table}

A similar experiment is carried out for methods with $\nu=2$ constructed after imposing order conditions (\ref{eq:OrderConditions}) for $d=1$; we first consider the $2$ nodes $c=\{0, \, 1\}$, for which an order of convergence equal to 2 is predicted, and hence we consider two combinations of optimal nodes, namely $c=\{0, \, 2/3\}$ and $c=\{1/3, \, 1\}$ for which the expected order of convergence is $2+\min\{1,\alpha\}$. Also these experiments (see Table \ref{tab:Exp2_Problem1_alpha50} for $\alpha=0.5$ and Table \ref{tab:Exp2_Problem1_alpha150} for $\alpha=1.5$) validate in a quite clear manner the results of Section \ref{S:ApplicationsFDS}.   

\begin{table}[htb!]
\scriptsize
\[
   \begin{array}{|r|cc|cc|cc|} \hline
     & \multicolumn{2}{|c|}{\textrm{$c=\{0,1\}$}} & \multicolumn{2}{|c|}{\textrm{$c=\{0,\frac{2}{3}\}$}} & \multicolumn{2}{|c|}{\textrm{$c=\{\frac{1}{3},1\}$}} \\ 
   h & \textrm{Error} & \textrm{EOC} & \textrm{Error} & \textrm{EOC} & \textrm{Error} & \textrm{EOC} \\ \hline
1/4 & 8.92(-4) &   & 1.35(-3) &   & 2.63(-4) &   \\
1/8 & 2.61(-4) & 1.775 & 2.72(-4) & 2.310 & 6.07(-5) & 2.113 \\
1/16 & 7.25(-5) & 1.844 & 5.25(-5) & 2.371 & 1.31(-5) & 2.213 \\
1/32 & 1.95(-5) & 1.892 & 9.88(-6) & 2.412 & 2.68(-6) & 2.289 \\
1/64 & 5.15(-6) & 1.925 & 1.82(-6) & 2.439 & 5.26(-7) & 2.347 \\
1/128 & 1.33(-6) & 1.948 & 3.31(-7) & 2.458 & 1.00(-7) & 2.390 \\
\hline
   \end{array}
\]
\normalsize
\caption{Errors and EOC for $\nu=2$ with forcing term (\ref{eq:ForcingTermTest1}), $\alpha=0.5$ and $p=3.0$.}
\label{tab:Exp2_Problem1_alpha50}
\end{table}

\begin{table}[htb!]
\scriptsize
\[
   \begin{array}{|r|cc|cc|cc|} \hline
     & \multicolumn{2}{|c|}{\textrm{$c=\{0,1\}$}} & \multicolumn{2}{|c|}{\textrm{$c=\{0,\frac{2}{3}\}$}} & \multicolumn{2}{|c|}{\textrm{$c=\{\frac{1}{3},1\}$}} \\ 
   h & \textrm{Error} & \textrm{EOC} & \textrm{Error} & \textrm{EOC} & \textrm{Error} & \textrm{EOC} \\ \hline
1/4 & 1.61(-3) &   & 5.05(-5) &   & 3.99(-6) &   \\
1/8 & 3.99(-4) & 2.014 & 6.70(-6) & 2.914 & 2.63(-6) & 0.600 \\
1/16 & 9.93(-5) & 2.005 & 8.57(-7) & 2.966 & 5.02(-7) & 2.393 \\
1/32 & 2.48(-5) & 2.002 & 1.08(-7) & 2.983 & 7.72(-8) & 2.699 \\
1/64 & 6.20(-6) & 2.001 & 1.37(-8) & 2.990 & 1.09(-8) & 2.824 \\
1/128 & 1.55(-6) & 2.000 & 1.71(-9) & 2.994 & 1.47(-9) & 2.888 \\
\hline
   \end{array}
\]
\normalsize
\caption{Errors and EOC for $\nu=2$ with forcing term (\ref{eq:ForcingTermTest1}), $\alpha=1.5$ and $p=4.0$.}
\label{tab:Exp2_Problem1_alpha150}
\end{table}

The same experiment is again repeated for $\nu=3$ and $d=2$ and the results are presented in Table \ref{tab:Exp3_Problem1_alpha50}. For brevity, in this case we omit to present the case $\alpha >1$.

 
\begin{table}[htb!]
\scriptsize
\[
   \begin{array}{|r|cc|cc|cc|} \hline
     & \multicolumn{2}{|c|}{\textrm{$c=\{0, \, 0.8, \, 1\}    $}} & \multicolumn{2}{|c|}{\textrm{$c=\{0, \, 0.5, \, 1\}    $}} & \multicolumn{2}{|c|}{\textrm{$c=\{0.2, \, 0.5, \, 0.8\}$}} \\ 
   h & \textrm{Error} & \textrm{EOC} & \textrm{Error} & \textrm{EOC} & \textrm{Error} & \textrm{EOC} \\ \hline
1/4 & 1.27(-5) &   & 3.29(-5) &   & 2.26(-5) &   \\
1/8 & 2.17(-6) & 2.545 & 3.24(-6) & 3.342 & 2.23(-6) & 3.339 \\
1/16 & 3.39(-7) & 2.679 & 3.08(-7) & 3.396 & 2.13(-7) & 3.390 \\
1/32 & 4.96(-8) & 2.773 & 2.86(-8) & 3.430 & 1.98(-8) & 3.423 \\
1/64 & 6.93(-9) & 2.839 & 2.61(-9) & 3.453 & 1.82(-9) & 3.446 \\
1/128 & 9.37(-10) & 2.886 & 2.36(-10) & 3.468 & 1.65(-10) & 3.462 \\
\hline
   \end{array}
\]
\normalsize
\caption{Errors and EOC for $\nu=3$ with forcing term (\ref{eq:ForcingTermTest1}), $\alpha=0.5$ and $p=4.0$.}
\label{tab:Exp3_Problem1_alpha50}
\end{table}

In Table \ref{tab:Exp4_Problem1_alpha50} we summarize errors and EOCs, for $\alpha=0.5$, obtained with an increasing number $\nu$ of nodes chosen in the optimal way described at the end of Section \ref{S:ApplicationsFDS}, and we compare them with the results provided by the fractional PECE method \cite{DiethelmFreed1999}. We have chosen a forcing term smooth enough to assure high accuracy for all the selected methods (in this case $p=6$).

The improvement in the accuracy, with respect to the fractional PECE method, is remarkable for all the methods and becomes striking as $\nu$ increases. Note that even with $\nu=4$ an accuracy very close to the machine precision is reached; thus simulations with $\nu > 4$ are not carried out.

\begin{table}[htb!]
\scriptsize
\[
   \begin{array}{|r|cc|cc|cc|cc|cc|} \hline
     & \multicolumn{2}{|c|}{\textrm{$c=\{\frac{1}{2}\}$}} & \multicolumn{2}{|c|}{\textrm{$c=\{\frac{1}{3},1\}$}} & \multicolumn{2}{|c|}{\textrm{$c=\{0,\frac{1}{2},1\}$}} & \multicolumn{2}{|c|}{\textrm{$c=\{0,\frac{1}{4},\frac{7}{10},1\}$}} & \multicolumn{2}{|c|}{\textrm{Frac. PECE}} \\ 
   h & \textrm{Error} & \textrm{EOC} & \textrm{Error} & \textrm{EOC} & \textrm{Error} & \textrm{EOC} & \textrm{Error} & \textrm{EOC} & \textrm{Error} & \textrm{EOC} \\ \hline
1/4 & 2.54(-4) &   & 1.58(-5) &   & 2.08(-6) &   & 7.59(-8) &   & 2.28(0) &   \\
1/8 & 1.18(-4) & 1.106 & 4.02(-6) & 1.976 & 2.59(-7) & 3.007 & 4.20(-9) & 4.175 & 4.84(-2) & 5.554 \\
1/16 & 4.95(-5) & 1.253 & 9.14(-7) & 2.137 & 2.87(-8) & 3.175 & 2.13(-10) & 4.301 & 4.33(-3) & 3.482 \\
1/32 & 1.95(-5) & 1.340 & 1.93(-7) & 2.245 & 2.95(-9) & 3.281 & 1.02(-11) & 4.378 & 1.03(-3) & 2.070 \\
1/64 & 7.44(-6) & 1.394 & 3.86(-8) & 2.320 & 2.89(-10) & 3.350 & 4.63(-13) & 4.466 & 2.98(-4) & 1.795 \\
1/128 & 2.76(-6) & 1.428 & 7.46(-9) & 2.372 & 2.75(-11) & 3.395 & 8.91(-15) & 5.701 & 9.31(-5) & 1.676 \\
\hline
   \end{array}
\]
\normalsize
\caption{Errors and EOC for various methods with forcing term (\ref{eq:ForcingTermTest1}), $\alpha=0.5$ and $p=6.0$.}
\label{tab:Exp4_Problem1_alpha50}
\end{table}

The methods investigated in this paper are computationally more expensive with respect to other approaches; however from these tests we can see that this extra amount of computation allows to reach high accuracies, which are reachable by other approaches only with a very small step--size and hence a huge computational cost. Thus in Figure \ref{fig:ErrorCostsP1}, we plot errors against computational time from the previous experiment (execution time are the mean values over a very large number of executions).


\begin{figure}[htb!]
\centering
	\includegraphics[width=0.50\textwidth]{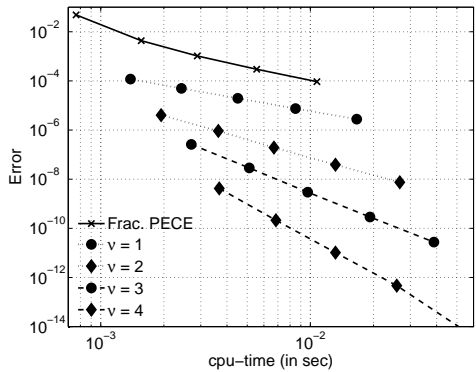} 
\caption{Accuracy versus time for experiments in Table \ref{tab:Exp4_Problem1_alpha50}}
\label{fig:ErrorCostsP1}
\end{figure}

As the plot clearly shows the execution time of the methods under investigation are reasonable and they do not increase exceedingly as the number $\nu$ of weights and nodes increases. Furthermore any given level of accuracy is obtained with the proposed scheme with a computational cost which is lower with respect to the classical fractional PECE method (for instance, an accuracy of $10^{-4}$ is obtained by the fractional PECE method in $10^{-2}$ CPU seconds and in approximatively $10^{-3}$ CPU seconds by the new method with $\nu=1$ or less when $\nu>1$).

In our next experiment the comparison is performed with a slightly more general forcing term 
\begin{equation}\label{eq:ForcingTermTest2}
	f(t) = \sin(t-t_{0}) + 3\cos(t-t_{0}).
\end{equation} 

As reference solution we consider that obtained by the method (\ref{eq:FDE_ConvolutionQuadrature}) with $\nu=4$ and a very small stepsize. Results in Table \ref{tab:Exp4_Problem2_alpha50} and Figure \ref{fig:ErrorCostsP2} show the effectiveness of the proposed method also for this test problem. 

\begin{table}[htb!]
\scriptsize
\[
   \begin{array}{|r|cc|cc|cc|cc|cc|} \hline
     & \multicolumn{2}{|c|}{\textrm{$c=\{\frac{1}{2}\}$}} & \multicolumn{2}{|c|}{\textrm{$c=\{\frac{1}{3},1\}$}} & \multicolumn{2}{|c|}{\textrm{$c=\{0,\frac{1}{2},1\}$}} & \multicolumn{2}{|c|}{\textrm{$c=\{0,\frac{1}{4},\frac{7}{10},1\}$}} & \multicolumn{2}{|c|}{\textrm{Frac. PECE}} \\ 
   h & \textrm{Error} & \textrm{EOC} & \textrm{Error} & \textrm{EOC} & \textrm{Error} & \textrm{EOC} & \textrm{Error} & \textrm{EOC} & \textrm{Error} & \textrm{EOC} \\ \hline
1/4 & 3.28(-2) &   & 6.36(-4) &   & 1.55(-5) &   & 3.19(-7) &   & 9.33(-2) &   \\
1/8 & 1.38(-2) & 1.253 & 1.41(-4) & 2.170 & 1.83(-6) & 3.084 & 1.57(-8) & 4.347 & 3.44(-2) & 1.440 \\
1/16 & 5.48(-3) & 1.329 & 2.98(-5) & 2.246 & 1.97(-7) & 3.217 & 7.49(-10) & 4.389 & 6.90(-3) & 2.317 \\
1/32 & 2.11(-3) & 1.380 & 6.01(-6) & 2.310 & 1.99(-8) & 3.304 & 3.49(-11) & 4.423 & 1.75(-3) & 1.976 \\
1/64 & 7.89(-4) & 1.415 & 1.17(-6) & 2.360 & 1.94(-9) & 3.363 & 1.60(-12) & 4.449 & 5.09(-4) & 1.786 \\
1/128 & 2.91(-4) & 1.440 & 2.22(-7) & 2.398 & 1.83(-10) & 3.406 & 7.26(-14) & 4.461 & 1.58(-4) & 1.683 \\
\hline
   \end{array}
\]
\normalsize
\caption{Errors and EOC for various methods with forcing term (\ref{eq:ForcingTermTest2}) and $\alpha=0.5$.}
\label{tab:Exp4_Problem2_alpha50}
\end{table}

\begin{figure}[htb!]
\centering
	\includegraphics[width=0.50\textwidth]{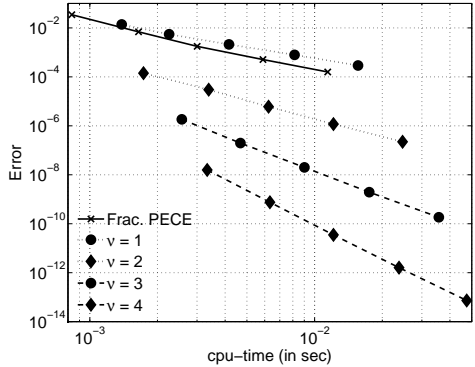} 
\caption{Accuracy versus time for experiments in Table \ref{tab:Exp4_Problem2_alpha50}}
\label{fig:ErrorCostsP2}
\end{figure}

Our last experiment concerns with a multidimensional system 
\begin{equation}\label{eq:Test3}
	D^{\alpha}_0 U(t) + A U(t) = F(t)
\end{equation}
where $A$ is an $M \times M$ matrix and $U$ and $F$ are $M$-valued functions. Problems of this kind occur frequently in applications; for instance, a time--fractional partial differential equation (PDE)
\begin{equation}\label{eq:Test3FractionalPDE}
	\frac{\partial^\alpha }{\partial t^{\alpha}}u(x,t) - \frac{\partial^2 u}{\partial x^2}u(x,t) = \frac{t^{p}}{\Gamma(p+1)} \sin \pi x 
\end{equation}
for $t>0$, $x\in[0,1]$ and subject to the initial and boundary conditions
\[
	u(x,0) = \sin \pi x, \quad
	u(0,t) = u(1,t) = 0 
\]
can be transformed into the system of FDEs (\ref{eq:Test3}) by the so--called \emph{Method of Lines}: once a mesh grid $x_{j} = j\Delta x$, $j=0,1,\dots,M+1$, $\Delta x = 1/(M+1)$, has been fixed on $[0,1]$, the spatial second derivative is approximated by means of the centered finite differences; in this way 
problem (\ref{eq:Test3FractionalPDE}) can be rewritten as (\ref{eq:Test3}) where $A$ is the tridiagonal matrix with entries $(-1,2,-1)$, scaled by the factor $(\Delta x)^2$, and
$U(t) = \bigl( u(x_{1},t), u(x_{2},t), \dots, u(x_{M},t) \bigr)^T, \,\, F(t) = \frac{t^{p}}{\Gamma(p+1)}\bigl( \sin (\pi x_{1}), \sin (\pi x_{2}), \dots, \sin (\pi x_{M}) \bigr)^T\,\in \Rset^{M}$.

More generally, $F(t)$ will include not only data arising from inhomogeneous terms in the differential operator but also possible inhomogeneous boundary data.

It is elementary to verify that the exact solution of the fractional PDE (\ref{eq:Test3FractionalPDE}) is given by $u(t,x) = \sin (\pi x) \left( e_{\alpha,1}(t;\pi^2) + e_{\alpha,p+\alpha+1}(t;\pi^2) \right)$ and its graph is plotted in Figure \ref{fig:Fig_FPDE_SolTeor}.

\begin{figure}[htb!]
\centering
	\includegraphics[width=0.50\textwidth]{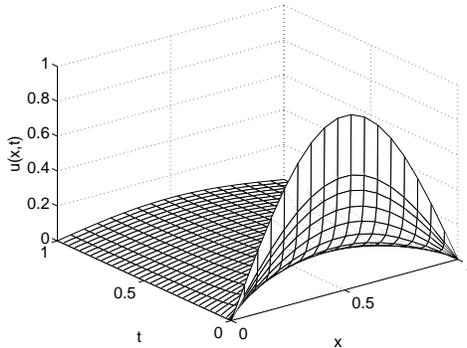} 
\caption{Exact solution of the fractional PDE (\ref{eq:Test3FractionalPDE}) for $\alpha=0.5$ and $p=3$.}
\label{fig:Fig_FPDE_SolTeor}
\end{figure}

In Table \ref{tab:FPDE_Problem3_N8_alpha80} we present the result of the computation for $\alpha=0.8$, $p=3$ and $M=8$. We note that for large values of $h$ the fractional PECE method fails in most cases to provide correct results due to instability problems. Indeed, the spectrum of the matrix $A$ lies in the interval $[-4/(\Delta x)^2,0]$ and, since the method is actually explicit, in order to obtain a stable behavior a very strong restriction on the step--size $h$, of the kind $h^{\alpha} < (\Delta x)^2 C_{\alpha} /4$, with $C_{\alpha} \in [1,2]$ (see \cite{Garrappa2010}), has to be imposed. Thus, to make a meaningful comparison we have considered also the implicit PI trapezoidal rule (the rule used as the predictor in the PECE algorithm) which possesses better stability properties. 

Also in this case we observe that method (\ref{eq:FDE_ConvolutionQuadrature}) allows to reach a greater accuracy with respect to the PI rule. Anyway the joined comparison of errors and computational costs, as showed in the first plot of Figure \ref{fig:ErrorCosts_FPDE_1}, emphasizes how the proposed method is more competitive especially when high accuracy is required. 

\begin{table}[htb!]
\scriptsize
\[
   \begin{array}{|r|cc|cc|cc|cc|} \hline
     & \multicolumn{2}{|c|}{\textrm{$\nu=2, \, c=\{\frac{1}{3},1\}$}} & \multicolumn{2}{|c|}{\textrm{$\nu=3, \, c=\{0,\frac{1}{2},1\}$}} & \multicolumn{2}{|c|}{\textrm{Frac. PECE}} & \multicolumn{2}{|c|}{\textrm{PI Trapez}} \\ 
   h & \textrm{Error} & \textrm{EOC} & \textrm{Error} & \textrm{EOC} & \textrm{Error} & \textrm{EOC} & \textrm{Error} & \textrm{EOC} \\ \hline
1/8 & 2.78(-5) &   & 4.97(-7) &   & 1.41(10) &   & 8.34(-4) &   \\
1/16 & 5.16(-6) & 2.429 & 4.21(-8) & 3.559 & 1.20(29) & * & 2.36(-4) & 1.822  \\
1/32 & 8.54(-7) & 2.594 & 3.30(-9) & 3.676 & 9.55(58) & * & 6.64(-5) & 1.829  \\
1/64 & 1.33(-7) & 2.686 & 2.47(-10) & 3.736 & 2.00(102) & * & 1.87(-5) & 1.827  \\
1/128 & 1.99(-8) & 2.735 & 1.82(-11) & 3.766 & 7.98(154) & * & 5.29(-6) & 1.823  \\
1/256 & 2.94(-9) & 2.762 & 1.32(-12) & 3.782 & 4.67(187) & * & 1.50(-6) & 1.820  \\
1/512 & 4.29(-10) & 2.777 & 9.56(-14) & 3.790 & 2.08(103) & * & 4.25(-7) & 1.817  \\
1/1024 & 6.22(-11) & 2.786 & 2.12(-14) & 2.175 & 5.59(-7) & * & 1.21(-7) & 1.815  \\
\hline
   \end{array}
\]
\normalsize
\caption{Errors and EOC for test problem (\ref{eq:Test3}) for $\alpha=0.8$, $N=8$ and $p=3.0$.}
\label{tab:FPDE_Problem3_N8_alpha80}
\end{table}


\begin{figure}[htb!]
\[
\begin{array}{cc}
	\includegraphics[width=0.50\textwidth]{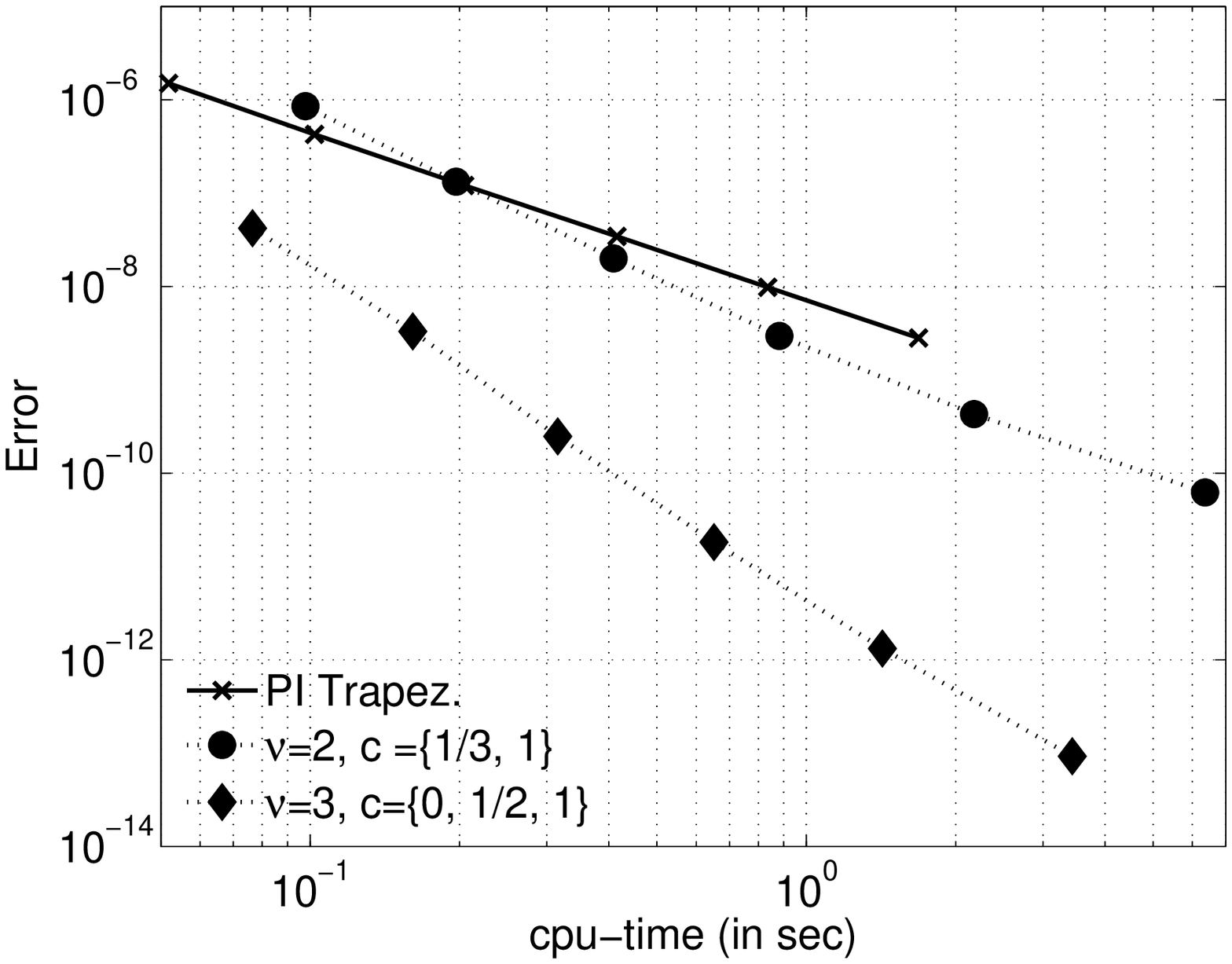} &
	\includegraphics[width=0.50\textwidth]{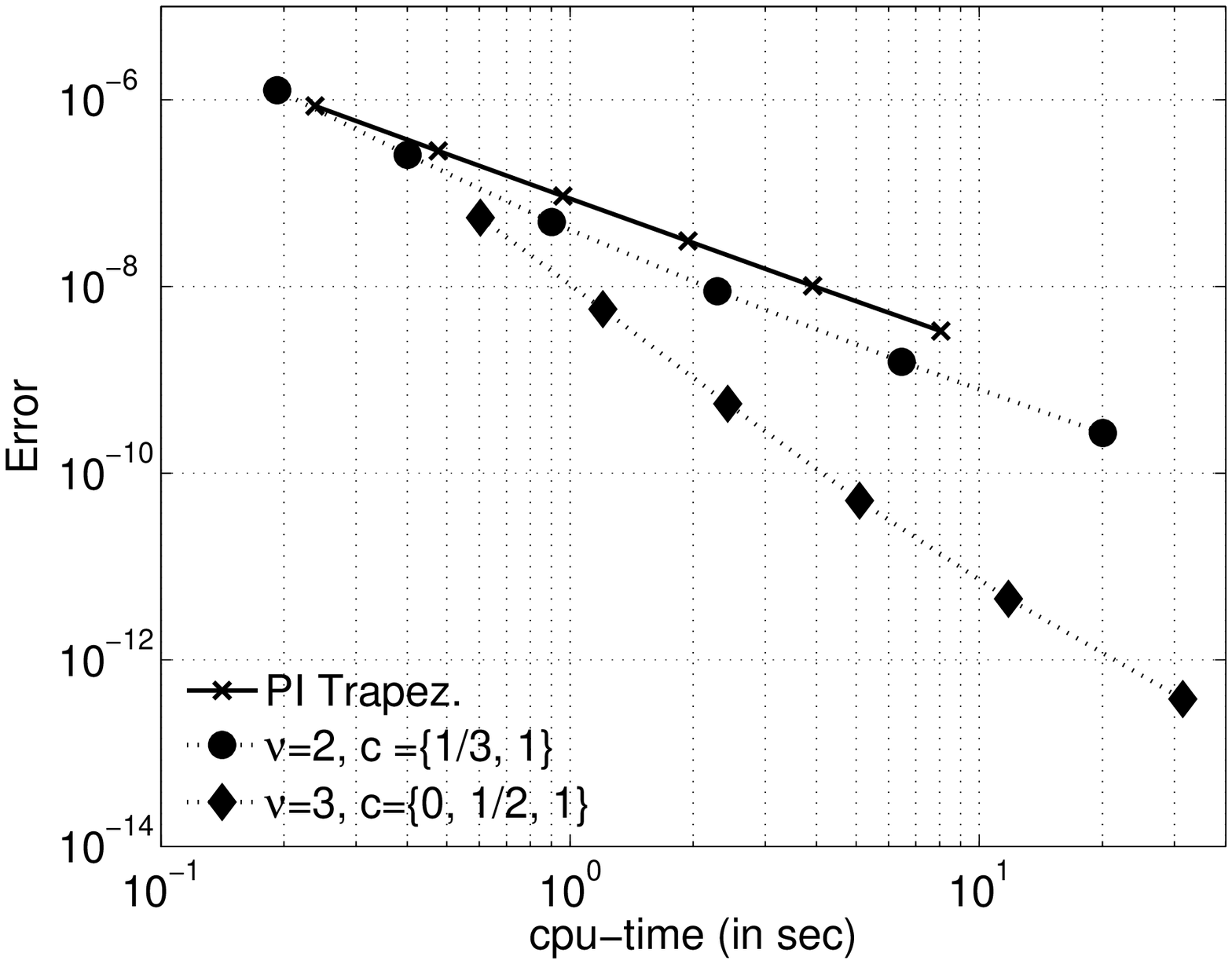} \\
\end{array}
\]	 
\caption{Accuracy versus time for for test problem (\ref{eq:Test3}) with $\alpha=0.9$, $M=8$ and $p=3.0$ (left plot) and $\alpha=0.6$, $M=16$ and $p=3.0$ (right plot).}
\label{fig:ErrorCosts_FPDE_1}
\end{figure}

Similar results are obtained for $M=16$ and $\alpha=0.6$ as shown in Table \ref{tab:FPDE_Problem3_N16_alpha60} and in the second plot of Figure \ref{fig:ErrorCosts_FPDE_1}.

\begin{table}[htb!]
\scriptsize
\[
   \begin{array}{|r|cc|cc|cc|} \hline
     & \multicolumn{2}{|c|}{\textrm{$\nu=2, \, c=\{\frac{1}{3},1\}$}} & \multicolumn{2}{|c|}{\textrm{$\nu=3, \, c=\{0,\frac{1}{2},1\}$}} & \multicolumn{2}{|c|}{\textrm{PI Trapez}} \\ 
   h & \textrm{Error} & \textrm{EOC} & \textrm{Error} & \textrm{EOC} & \textrm{Error} & \textrm{EOC} \\ \hline
1/8 & 2.30(-5) &   & 4.76(-7) &   & 2.17(-3) &   \\
1/16 & 5.68(-6) & 2.014 & 5.46(-8) & 3.125 & 7.00(-4) & 1.631 \\
1/32 & 1.26(-6) & 2.170 & 5.70(-9) & 3.259 & 2.27(-4) & 1.624 \\
1/64 & 2.57(-7) & 2.298 & 5.53(-10) & 3.366 & 7.41(-5) & 1.616 \\
1/128 & 4.88(-8) & 2.394 & 5.08(-11) & 3.444 & 2.43(-5) & 1.611 \\
1/256 & 8.87(-9) & 2.461 & 4.50(-12) & 3.497 & 7.96(-6) & 1.608 \\
1/512 & 1.56(-9) & 2.508 & 3.83(-13) & 3.554 & 2.61(-6) & 1.606 \\
1/1024 & 2.68(-10) & 2.539 & 3.28(-14) & 3.545 & 8.60(-7) & 1.604 \\
\hline
   \end{array}
\]
\normalsize
\caption{Errors and EOC for test problem (\ref{eq:Test3}) for $\alpha=0.6$, $N=16$ and $p=3.0$.}
\label{tab:FPDE_Problem3_N16_alpha60}
\end{table}


\section{Appendix}

\begin{lem}\label{lem:DefinedIntegrationMittagLeffler}
Let $a<b \le t$, $\Re(\alpha) > 0$, $\beta>0$ and $k\in\Nset$. Then 
\[
	R_{\alpha,\beta,k}(t,a,b;z)
	= e_{\alpha,\beta+k+1}(t-a;z) - \sum_{\ell=0}^{k} \frac{(b-a)^{k-\ell}}{(k-\ell)!} e_{\alpha,\beta+\ell+1}(t-b;z) .
\]
\end{lem}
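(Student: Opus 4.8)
The plan is to reduce everything to the already-established integration formula in Lemma \ref{lem:IntegrationMittagLeffler}, which evaluates the weighted integral of $e_{\alpha,\beta}$ when the upper limit coincides with the reference point $t$. The only obstruction is that the definition (\ref{eq:DefinitionR}) of $R_{\alpha,\beta,k}(t,a,b;z)$ integrates up to $b$ rather than $t$, and the weight $(s-a)^k$ is centered at $a$ rather than at the endpoint $b$. I would dispose of both issues by a splitting of the interval and a binomial recentring.

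First I would write $\int_{a}^{b} = \int_{a}^{t} - \int_{b}^{t}$, which is legitimate since $a<b\le t$, to obtain
\[
	R_{\alpha,\beta,k}(t,a,b;z)
	= \frac{1}{\Gamma(k+1)}\int_{a}^{t} e_{\alpha,\beta}(t-s;z)(s-a)^k\,ds
	- \frac{1}{\Gamma(k+1)}\int_{b}^{t} e_{\alpha,\beta}(t-s;z)(s-a)^k\,ds .
\]
The first integral is exactly the left-hand side of Lemma \ref{lem:IntegrationMittagLeffler} with $r=k$, so it equals $e_{\alpha,\beta+k+1}(t-a;z)$, which is the leading term of the claimed identity. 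It therefore remains to show that the second integral produces the subtracted sum.

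For the second integral I would recentre the weight at $b$ via the binomial expansion
\[
	(s-a)^k = \bigl((s-b)+(b-a)\bigr)^k = \sum_{\ell=0}^{k} \binom{k}{\ell} (b-a)^{k-\ell}(s-b)^{\ell},
\]
and insert it under the integral sign. Using $\binom{k}{\ell}/k! = 1/\bigl(\ell!\,(k-\ell)!\bigr)$, each summand becomes $\frac{(b-a)^{k-\ell}}{(k-\ell)!}$ times $\frac{1}{\ell!}\int_{b}^{t} e_{\alpha,\beta}(t-s;z)(s-b)^{\ell}\,ds$. Now I would apply Lemma \ref{lem:IntegrationMittagLeffler} once more, this time with the reference point $b$ in place of $a$ and $r=\ell$ (valid since $b\le t$), identifying the inner integral with $e_{\alpha,\beta+\ell+1}(t-b;z)$. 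This yields $\sum_{\ell=0}^{k}\frac{(b-a)^{k-\ell}}{(k-\ell)!} e_{\alpha,\beta+\ell+1}(t-b;z)$, and combining with the first term completes the proof.

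I expect no genuine difficulty here; the computation is routine once the splitting and recentring are in place. The only point worth a remark is the degenerate case $b=t$, where the second integral is empty: there the identity must still hold, and indeed it does, because $e_{\alpha,\beta+\ell+1}(0;z)=0$ for every $\ell\ge 0$ (as $\beta>0$ forces $\beta+\ell+1>1$), so the entire sum vanishes and the formula collapses to $R_{\alpha,\beta,k}(t,a,t;z)=e_{\alpha,\beta+k+1}(t-a;z)$, consistent with Lemma \ref{lem:IntegrationMittagLeffler}.
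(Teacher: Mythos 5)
Your proof is correct and takes essentially the same route as the paper's own argument: split $\int_a^b = \int_a^t - \int_b^t$, recentre the weight via the binomial expansion $(s-a)^k = \sum_{\ell=0}^{k}\binom{k}{\ell}(b-a)^{k-\ell}(s-b)^{\ell}$, and apply Lemma \ref{lem:IntegrationMittagLeffler} to each resulting integral. Your additional check of the degenerate case $b=t$ (where the sum vanishes because $e_{\alpha,\beta+\ell+1}(0;z)=0$ for $\beta>0$) is a sound refinement that the paper simply leaves implicit.
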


\begin{proof}
First write
\[
	\int_{a}^{b} e_{\alpha,\beta}(t-s;z) (s-a)^k ds =
	\int_{a}^{t} e_{\alpha,\beta}(t-s;z) (s-a)^k ds 
	 - 	\int_{b}^{t} e_{\alpha,\beta}(t-s;z) (s-a)^k ds
\]
and hence, since for $h=b-a$ it is
\[
	(s-a)^k = (s-b+h)^k = \sum_{\ell=0}^{k} \binom{k}{\ell} h^{k-\ell} (s-b)^{\ell} ,
\]
the application of Lemma \ref{lem:IntegrationMittagLeffler} leads to the desired result
since  $\Gamma(k+1)=k!$, being $k\in\Nset$.  
\end{proof}



\begin{lem}\label{eq:LimitValuesR}
Let $a<b\le t$ and $k \in \Nset$. Then 
$R_{1,1,k}(t,a,b;0) = (b-a)^{k+1}/(k+1)!$.
\end{lem}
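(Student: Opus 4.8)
The plan is to evaluate $R_{1,1,k}(t,a,b;0)$ directly from its integral definition \eqref{eq:DefinitionR}, since the special values $\alpha=1$, $\beta=1$, $z=0$ collapse the generalized Mittag--Leffler weight to something completely elementary. First I would examine the weight function $e_{\alpha,\beta}(t-s;z)$ at these parameter values. Recall $e_{\alpha,\beta}(t;z) = t^{\beta-1} E_{\alpha,\beta}(-t^{\alpha} z)$, so with $\alpha=\beta=1$ and $z=0$ we get $e_{1,1}(\tau;0) = \tau^{0} E_{1,1}(0) = E_{1,1}(0)$. From the series $E_{\alpha,\beta}(w)=\sum_{k\ge0} w^{k}/\Gamma(\alpha k+\beta)$ we have $E_{1,1}(0) = 1/\Gamma(1) = 1$. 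Hence $e_{1,1}(t-s;0) \equiv 1$, and the weighted integral degenerates to a plain integral.

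With the weight identically equal to $1$ and $\Gamma(k+1)=k!$, definition \eqref{eq:DefinitionR} becomes
\[
	R_{1,1,k}(t,a,b;0) = \frac{1}{k!} \int_{a}^{b} (s-a)^{k}\, ds .
\]
The remaining step is the routine antiderivative: substituting $r=s-a$ gives $\int_{0}^{b-a} r^{k}\, dr = (b-a)^{k+1}/(k+1)$, and dividing by $k!$ yields $(b-a)^{k+1}/\bigl((k+1)\cdot k!\bigr) = (b-a)^{k+1}/(k+1)!$, which is exactly the claimed identity. Note that the parameters satisfy the hypotheses of Lemma \ref{lem:IntegrationMittagLeffler} (indeed $\Re(\alpha)=1>0$, $\beta=1>0$, $r=k>-1$), so one could alternatively appeal to that lemma in the limiting case $b=t$; but the direct computation above is cleaner and covers all $b\le t$ at once.

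There is essentially no obstacle here: the only point requiring the slightest care is confirming that the Mittag--Leffler weight truly reduces to the constant $1$ rather than to some $s$-dependent expression, which hinges on the cancellation of the $t^{\beta-1}$ prefactor when $\beta=1$ and the vanishing of all higher-order terms in the series when $z=0$. Once this is observed, the statement is an immediate consequence of the elementary power-rule integration, and the identity $\Gamma(k+1)=k!$ for $k\in\Nset$ ties the normalizing constant in \eqref{eq:DefinitionR} to the factorial appearing in the result.
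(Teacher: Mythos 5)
Your proof is correct, but it takes a genuinely different route from the paper's. The paper does not touch the defining integral directly: it first invokes Lemma \ref{lem:DefinedIntegrationMittagLeffler} to expand $R_{1,1,k}(t,a,b;0)$ as $e_{1,k+2}(t-a;0)$ minus the sum $\sum_{\ell=0}^{k}\frac{(b-a)^{k-\ell}}{(k-\ell)!}\,e_{1,\ell+2}(t-b;0)$, then substitutes the evaluation $e_{\alpha,\beta}(t;0)=t^{\beta-1}/\Gamma(\beta)$, and finally collapses the sum via the identity $\frac{1}{(k-\ell)!\,(\ell+1)!}=\frac{1}{(k+1)!}\binom{k+1}{\ell+1}$ and the binomial theorem, everything cancelling because $(t-b+h)^{k+1}=(t-a)^{k+1}$ with $h=b-a$. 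You instead observe that at $\alpha=\beta=1$, $z=0$ the weight in definition (\ref{eq:DefinitionR}) is identically $1$ (the prefactor $(t-s)^{\beta-1}$ disappears for $\beta=1$ and $E_{1,1}(0)=1$), so the claim reduces to the elementary integral $\frac{1}{k!}\int_{a}^{b}(s-a)^{k}\,ds=(b-a)^{k+1}/(k+1)!$. Your argument is shorter and more self-contained, bypassing both the appendix decomposition lemma and the binomial manipulation; the paper's route buys nothing extra in generality here, but it reuses machinery already established and exercises Lemma \ref{lem:DefinedIntegrationMittagLeffler} in a degenerate case, which serves as a consistency check of that lemma. The one point worth keeping explicit, which you do flag, is that $(t-s)^{0}=1$ for all $s\in[a,b]$ since $b\le t$ (the single point $s=b=t$, where one adopts the convention $0^{0}=1$, is harmless for the integral).
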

\begin{proof}
It is elementary to see that $
	e_{\alpha,\beta}(t;0) = t^{\beta-1}/\Gamma(\beta)
$
and hence Lemma \ref{lem:DefinedIntegrationMittagLeffler} leads to
\[
	R_{1,1,k}(t,a,b;0)
	= \frac{(t-a)^{k+1}}{\Gamma(k+2)} 
			- \sum_{\ell=0}^{k} \frac{h^{k-\ell}}{(k-\ell)!} \frac{(t-b)^{\ell+1}}{\Gamma(\ell+2)} ,
\]	
where $h=b-a$. By recalling that $\Gamma(n+1)=n!$ and
\[
	\frac{1}{(k-\ell)!(\ell+1)!} = \frac{1}{(k+1)!} \binom{k+1}{\ell+1} ,
\]
we can write
\begin{eqnarray*}
	R_{1,1,k}(t,a,b;0)
	&=& \frac{1}{(k+1)!} \left( (t-a)^{k+1} - \sum_{\ell=1}^{k+1} \binom{k+1}{\ell} h^{k-\ell+1}(t-b)^{\ell} \right) \\
	&=& \frac{1}{(k+1)!} \left( (t-a)^{k+1} - (t-b+h)^{k+1} + h^{k+1} \right)
\end{eqnarray*}	
from which the proof easily follows since $t-b+h=t-a$. 
\end{proof}

\begin{lem}\label{lem:DefinedIntegrationMittagLefflerScaled}
Let $a<b\le t$, $h=b-a$ and $k\in\Nset$. Then for any $\xi \in [a,b]$ it is
\[
	\frac{1}{\Gamma(k+1)} \int_{\xi}^{b} e_{\alpha,\beta}(t-s;z)(s-\xi)^k ds 
	= h^{\beta+k} R_{\alpha,\beta,k}\Bigl(\frac{t-a}{h},\frac{\xi-a}{h},1;h^{\alpha}z\Bigr)
\]
\end{lem}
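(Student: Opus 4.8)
The plan is to reduce the identity to the scaling relation (\ref{eq:ScalingFunctionE}) by a single affine change of variable. Starting from the left--hand side, I would introduce $\sigma = (s-a)/h$, so that $s = a + h\sigma$ and $ds = h\,d\sigma$; as $s$ runs over $[\xi,b]$ the new variable $\sigma$ runs over $[(\xi-a)/h,\,1]$, since $b-a=h$. Under this substitution the two relevant quantities transform as $s-\xi = h\bigl(\sigma - (\xi-a)/h\bigr)$ and $t-s = h\bigl((t-a)/h - \sigma\bigr)$, so the power $(s-\xi)^k$ produces a factor $h^k$ and the argument of $e_{\alpha,\beta}$ becomes $h$ times a rescaled argument.

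Next I would apply the scaling identity (\ref{eq:ScalingFunctionE}), with the free positive parameter there chosen to be $h=b-a$, to the factor $e_{\alpha,\beta}(t-s;z)$. Writing $t-s = h\,w$ with $w = (t-a)/h - \sigma$ and substituting $t\mapsto hw$ in (\ref{eq:ScalingFunctionE}) gives $e_{\alpha,\beta}(hw;z) = h^{\beta-1} e_{\alpha,\beta}(w;\,h^{\alpha}z)$, so that the parameter $z$ is promoted to $h^{\alpha}z$ and a factor $h^{\beta-1}$ is extracted. Collecting the three contributions $h^{\beta-1}$ (from scaling), $h^k$ (from $(s-\xi)^k$), and $h$ (from $ds$) yields the overall prefactor $h^{\beta+k}$.

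Finally, I would observe that the integral left over, namely
\[
	\frac{1}{\Gamma(k+1)} \int_{(\xi-a)/h}^{1} e_{\alpha,\beta}\Bigl(\tfrac{t-a}{h}-\sigma;\,h^{\alpha}z\Bigr)\Bigl(\sigma - \tfrac{\xi-a}{h}\Bigr)^k d\sigma,
\]
is exactly $R_{\alpha,\beta,k}\bigl((t-a)/h,\,(\xi-a)/h,\,1;\,h^{\alpha}z\bigr)$ according to the defining formula (\ref{eq:DefinitionR}), with the roles of the parameters $a$ and $b$ in that definition played here by $(\xi-a)/h$ and $1$ respectively. This identification completes the proof.

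The computation is essentially bookkeeping, so there is no serious obstacle; the only points requiring care are matching the transformed lower limit $(\xi-a)/h$ with the shift $\bigl(\sigma - (\xi-a)/h\bigr)^k$ so that the integrand fits the pattern $(s-a)^r$ of definition (\ref{eq:DefinitionR}), and applying the scaling relation (\ref{eq:ScalingFunctionE}) with the correct exponent $\beta-1$ and the correct transformation $z\mapsto h^{\alpha}z$.
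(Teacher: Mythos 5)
Your proof is correct and follows essentially the same route as the paper: the paper's proof also makes the single affine substitution $s=a+hr$ (your $\sigma$), applies the scaling relation (\ref{eq:ScalingFunctionE}) to extract the factor $h^{\beta-1}$ and promote $z$ to $h^{\alpha}z$, and then identifies the remaining integral with the definition (\ref{eq:DefinitionR}). The bookkeeping of the factors $h^{k}$, $h$, and $h^{\beta-1}$ and the matching of the lower limit $(\xi-a)/h$ with the shift in the power are handled exactly as in the paper.
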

\begin{proof}
First consider $u=(\xi-a)/h$. Thus
\[
	\int_{\xi}^{b} e_{\alpha,\beta}(t-s;z)(s-\xi)^k ds 
	= \int_{a+uh}^{b} e_{\alpha,\beta}(t-s;z)(s-a-uh)^k ds 
\]
and the change of variable $s=a+hr$ leads to
\[
	\int_{\xi}^{b} e_{\alpha,\beta}(t-s;z)(s-\xi)^k ds 
	= h^{k+1} \int_{u}^{1} e_{\alpha,\beta}(t-a-rh;z)(r-u)^k dr .
\]

By using (\ref{eq:ScalingFunctionE}) we have $e_{\alpha,\beta}(t-a-rh;z) = h^{\beta-1} e_{\alpha,\beta}((t-a)/h-r;h^{\alpha}z)$ from which the proof follows in a straightforward way. 
\end{proof}

\end{document}